\newtheorem{theorem}{Theorem}[section]
\newtheorem{lemma}[theorem]{Lemma}
\newtheorem{proposition}[theorem]{Proposition}
\newtheorem{corollary}[theorem]{Corollary}
\newtheorem{claim}{Claim}
\theoremstyle{plain}
\theoremstyle{definition}
\newtheorem{definition}[theorem]{Definition}
\theoremstyle{plain}
\theoremstyle{remark}
\newtheorem{remark}[theorem]{Remark}
\let\epsilon=\varepsilon
\begin{document}

\setstretch{1.27}
%\linenumbers

\title{Regular bipartite decompositions of pseudorandom graphs}

\author{Asaf Ferber\thanks{Department of Mathematics, University of California, Irvine. 
Email:  {{\tt asaff@uci.edu}}. Research is supported by NSF grant DMS-1953799, NSF Career DMS-2146406, a Sloan's fellowship, and an Air force grant FA9550-23-1-0298.} \and Bryce~Frederickson\thanks{Department of Mathematics, Emory University. Email: {\tt bfrede4@emory.edu}. Research is supported by the NSF Graduate Research Fellowship Program under Grant No. 1937971.} \and Dingjia~Mao\thanks{Department of Mathematics, University of California, Irvine.
Email:  {{\tt dingjiam@uci.edu}}} \and Liana~Yepremyan\thanks{Department of Mathematics, Emory University. 
Email:  {{\tt liana.yepremyan@emory.edu}}. Research is supported by the National Science Foundation grant 2247013: Forbidden and Colored Subgraphs. } \and Yizhe~Zhu\thanks{Department of Mathematics, University of Southern California.
Email:  {{\tt yizhezhu@usc.edu}.} Research is supported by NSF-Simons Research Collaborations on the Mathematical and Scientific Foundations of Deep Learning and an AMS-Simons Travel Grant. 
}}

\date{}

\maketitle

\begin{abstract}
In 1972, Kotzig proved that for every even $n$, the complete graph $K_n$ can be decomposed into $\lceil\log_2n\rceil$ edge-disjoint regular bipartite spanning subgraphs, which is best possible. In this paper, we study regular bipartite decompositions of $(n,d,\lambda)$-graphs, where $n$ is an even integer and $d_0\leq d\leq n-1$ for some absolute constant $d_0$. With a randomized algorithm,
we prove that such an $(n,d,\lambda)$-graph with $\lambda\leq d/12$ can be decomposed into at most $\log_2 d + 36$ regular bipartite spanning subgraphs. This is best possible up to the additive constant term. As a consequence, we also improve the best known bounds on $\lambda = \lambda(d)$ by Ferber and Jain (2020) to guarantee that an $(n,d,\lambda)$-graph on an even number of vertices admits a $1$-factorization, showing that $\lambda \leq cd$ is sufficient for some absolute constant $c > 0$.

\end{abstract}
%\linenumbers

\section{Introduction}

\indent

Packing and covering problems have a long history in graph theory. A \emph{covering} of a graph $G$ is a collection $\mathcal C$ of subgraphs of $G$ such that $E(G) = \bigcup_{H \in \mathcal{C}} E(H)$. If the graphs in $H$ are edge-disjoint, then $\mathcal C$ is called a \emph{decomposition}, or \emph{packing}, of $G$. Typical problems in the area ask for the minimum size of a decomposition $\mathcal C$ (or covering) of a fixed graph $G$, subject to each subgraph in $\mathcal C$ having a certain form. For example, one might look for a decomposition (or covering) into matchings \cite{vizing1964estimate}, paths \cites{Lovasz1968, fan2002subgraph}, cycles \cites{Bucic2023, hilton1984hamiltonian, kuhn2014hamilton, fan2002subgraph}, triangles \cite{kirkman1847problem}, cliques \cite{erdos1966representation}, or bicliques \cite{graham1972embedding}.

In this paper, we consider decompositions of a graph $G$ into regular bipartite spanning subgraphs. 
This type of decomposition problem was first studied by Kotzig \cite{kotzig1972decompositions}, who proved the following exact result for the complete graph $K_n$ on an even number of vertices.

\begin{theorem}\label{thm:complete-graph}
Let $n \geq 2$ be an even integer. Then the edge set of the complete graph $K_n$ can be decomposed into $\lceil\log_2 n\rceil$ regular bipartite spanning subgraphs, which is best possible.
\end{theorem}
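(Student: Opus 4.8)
The plan is to establish the lower bound by a simple coding argument and the upper bound by a recursive construction, with the residue class $n\equiv2\pmod4$ requiring by far the most care.

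\emph{Lower bound.} Suppose $E(K_n)=E(H_1)\cup\dots\cup E(H_k)$ with each $H_j$ a regular bipartite spanning subgraph, and fix a bipartition $V=A_j\sqcup B_j$ of each $H_j$. Assign to every vertex $v$ the string $c(v)\in\{0,1\}^k$ with $c(v)_j=0$ if $v\in A_j$ and $c(v)_j=1$ otherwise. If $c(u)=c(v)$ for distinct $u,v$, then $u$ and $v$ lie on the same side of every $H_j$, so the edge $uv$ lies in no $H_j$ -- a contradiction. Hence $c$ is injective, $n\le 2^k$, and $k\ge\lceil\log_2 n\rceil$.

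\emph{Upper bound, powers of two.} For $n=2^m$ I would identify $V$ with $\{0,1\}^m$ and send each edge $\{u,v\}$ to the coordinate $i$ of the most significant bit on which $u$ and $v$ differ. After fixing the bits above $i$, the edges sent to $i$ form the complete bipartite graph between the points of that subcube whose $i$-th bit is $0$ and those whose $i$-th bit is $1$; hence the class of coordinate $i$ is a disjoint union of copies of $K_{2^{i-1},2^{i-1}}$, i.e.\ a $2^{i-1}$-regular bipartite spanning subgraph. This is a decomposition into $m=\lceil\log_2 n\rceil$ parts. (Equivalently and inductively: split $V=A\sqcup B$ into halves, take $K[A,B]$ as one part, apply the same decomposition to $K[A]$ and $K[B]$ through a bijection $A\to B$, and take levelwise unions.)

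\emph{General even $n$.} Write $2^{k-1}<n\le 2^k$, so $k=\lceil\log_2 n\rceil$, and induct on $n$ (base case $n=2$). If $4\mid n$, I would split $V=A\sqcup B$ into halves of the even size $n/2$, decompose $K_{n/2}$ into $\lceil\log_2(n/2)\rceil=k-1$ parts by induction, copy this decomposition onto both $A$ and $B$ via a bijection, take levelwise unions, and append $H_k=K[A,B]$; this gives $k$ regular bipartite spanning subgraphs. The case $n\equiv 2\pmod 4$ is the heart of the matter, and here the balanced split provably fails: $n/2$ is odd, and an odd-order complete graph has \emph{no} nonempty regular bipartite spanning subgraph (a nonempty $r$-regular bipartite graph has equal sides), so $K_n-K[A,B]=K[A]\sqcup K[B]$ cannot be decomposed. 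Instead I would use the near-balanced split $|A|=n/2+1$, $|B|=n/2-1$ (both even, both $\le 2^{k-1}$); decompose $K[A]$ and $K[B]$ by induction but \emph{over}-decompose each into exactly $k$ regular bipartite pieces $G_1,\dots,G_k$ and $F_1,\dots,F_k$ (empty pieces allowed; a piece of degree $\ge 2$ is split into a perfect matching plus the remainder using K\"{o}nig's theorem), then partition the edges of the \emph{unbalanced} bipartite graph $K[A,B]$ into subgraphs $C_1,\dots,C_k$, and output $H_i=G_i\cup F_i\cup C_i$.

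\emph{Where the difficulty lies.} For each $H_i$ to be regular one needs $C_i$ to be biregular with $A$-degree $x_i$ and $B$-degree $y_i$ satisfying $|A|x_i=|B|y_i$, $\sum_i x_i=|B|$, $\sum_i y_i=|A|$, and $\deg G_i+x_i=\deg F_i+y_i$; with $g=\gcd(|A|,|B|)=2$ these reduce to choosing a nonnegative vector $(t_i)$ with $\sum_i t_i=g$ and matching the two degree sequences via $\deg G_i-\deg F_i=\tfrac{|A|-|B|}{g}\,t_i$. The hard part will be to (i) meet these numeric constraints by a good choice of the two over-decompositions and (ii) pick the biregular graphs $C_i\subseteq K[A,B]$ so that they partition $K[A,B]$ \emph{and} each is supported on the bipartition of $H_i$ prescribed by $G_i$ and $F_i$ (each $A$-vertex on a given side of $G_i$ may only receive $C_i$-edges to $B$-vertices on the opposite side of $F_i$); I would attack (ii) by a Hall/flow argument, or by a Baranyai-type decomposition of a complete bipartite graph into prescribed biregular subgraphs refined to given sub-bipartitions. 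A cleaner alternative worth trying first is to realise $V$ as $\ZZ_{n/2}\times\{0,1\}$ and use the balanced bipartitions $\pi_S=\big(S\times\{0\}\cup S^{c}\times\{1\},\ S^{c}\times\{0\}\cup S\times\{1\}\big)$ for a nested family $S\subseteq\ZZ_{n/2}$ chosen so that the induced codewords are distinct; this turns the task into a single explicit edge-colouring, although the same regularity bookkeeping resurfaces there.
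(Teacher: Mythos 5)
The paper does not prove \Cref{thm:complete-graph}: it is quoted as Kotzig's result and used as motivation, so there is no internal proof to compare yours against. Judged on its own, your argument has a correct lower bound (the coding argument is exactly right), a correct and clean construction for $n$ a power of two, and a correct inductive step for $4\mid n$ (the bijection/levelwise-union trick does produce $k-1$ regular bipartite spanning subgraphs from $K[A]\cup K[B]$, and $K[A,B]$ supplies the $k$-th). You have also correctly diagnosed why the balanced split cannot work when $n\equiv 2\pmod 4$, and your arithmetic for the unbalanced split $|A|=n/2+1$, $|B|=n/2-1$ is sound, including the check that $|A|\le 2^{k-1}$.

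However, the case $n\equiv 2\pmod 4$ is left genuinely open, and you say so yourself. Two distinct things are missing. First, you must actually produce over-decompositions of $K[A]$ and $K[B]$ into exactly $k$ regular bipartite pieces whose degree sequences satisfy $\deg G_i-\deg F_i=t_i\ge 0$ with $\sum t_i=2$ and $t_i\le 2$; splitting a piece into a perfect matching plus a remainder via K\"onig gives you one extra piece and one unit of slack at a time, but you have not verified that the slack can always be placed where needed, nor that the remainders stay bipartite-spanning with the required bipartitions. Second, and more seriously, even with the degree bookkeeping done you have not shown that $K[A,B]$ can be partitioned into biregular subgraphs $C_1,\dots,C_k$ where each $C_i$ is supported only between $X_i'\times Y_i''$ and $X_i''\times Y_i'$ (in the notation where $G_i,F_i$ have bipartitions $\{X_i',X_i''\}$, $\{Y_i',Y_i''\}$). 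A Hall/Ore condition check is needed for each $C_i$ inside the leftover of $K[A,B]$, and an argument that the parts can be chosen greedily without getting stuck; none of this is carried out. Since $n\equiv 2\pmod 4$ covers $n=6,10,14,\dots$ and also feeds the induction for all larger $n$ that pass through those residues, this gap makes the proof incomplete rather than merely terse. (The $\ZZ_{n/2}\times\{0,1\}$ alternative you float is a reasonable lead but, as you note, it reproduces the same bookkeeping problem and is likewise not executed.)
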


We aim to extend \Cref{thm:complete-graph} to a larger class of graphs. For a graph $G$ to admit a decomposition into regular bipartite spanning subgraphs, $G$ must be regular ($1$-factorizable, in fact) on an even number of vertices. Our main result concerns regular bipartite decompositions of regular \emph{pseudorandom} graphs. Specifically, we are interested in the class of \emph{$(n,d,\lambda)$-graphs}. These graphs play an important role in combinatorics and theoretical computer science, including sampling, complexity theory, and
the design of error-correcting codes \cites{alon1986eigenvalues,hoory2006expander,lubotzky2012expander}. These are defined in the following way. Given a graph $G$ on vertex set $V=\{v_1,\ldots,v_n\}$, its \emph{adjacency matrix} $A:=A(G)$ is an $n\times n$, $0/1$ matrix, defined by $A_{ij}=1$ if and only if $v_iv_j\in E(G)$. Let $\lambda_1\geq \lambda_2\geq\ldots \geq \lambda_n$ be the eigenvalues of $A$. It is a well-known fact that for a $d$-regular graph $G$, we always have $\lambda_1=d$ and $\lambda_n \geq -d$. We say that $G$ is an \emph{$(n,d,\lambda)$-graph} if it is a $d$-regular graph on $n$ vertices with $\lambda (G)\coloneqq \max\{|\lambda_2|,|\lambda_n|\}\leq \lambda$.

Decomposition problems in pseudorandom graphs have been intensively studied in recent years. For example, the \emph{perfect matching}, or \emph{$1$-factor}, which is a collection of vertex-disjoint edges covering all the vertices, is the most fundamental type of regular bipartite spanning subgraph. A decomposition of a graph into perfect matchings is called a \emph{$1$-factorization}. For finding a single perfect matching, Krivelevich and Sudakov \cite{krivelevich2006pseudo} showed that an $(n,d,\lambda)$-graph with an even number of vertices contains a perfect matching whenever $\lambda\leq d-2$. More precise spectral conditions to guarantee a perfect matching were later given by Cioab\v a, Gregory and Haemers \cite{cioabua2009matchings}. As for $1$-factorizations of $(n,d,\lambda)$-graphs, Ferber and Jain \cite{ferber20181} proved the following result.

\begin{theorem}[Theorem 1.1 in \cite{ferber20181}]\label{thm:1-factorization of pseudo}
For every $\varepsilon>0$, there exists $d_0,n_0\in\mathbb{N}$ such that for all even integers $n\geq n_0$ and for all $d\geq d_0$ the following holds. Suppose that $G$ is an $(n,d,\lambda)$-graph with $\lambda\leq d^{1-\varepsilon}$. Then the edge set of $G$ can be decomposed into $d$ edge-disjoint perfect matchings.
\end{theorem}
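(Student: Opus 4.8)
The target is a proper edge-colouring of $G$ with exactly $d$ colours in which every colour class is a perfect matching; since $G$ is $d$-regular, such a colouring is precisely a $1$-factorization. The first thing to note is that the naive ``peel off one perfect matching at a time'' strategy fails: by the Krivelevich--Sudakov result quoted above, the leftover $(d-k)$-regular graph $G_k$ has a perfect matching as long as $\lambda(G_k)\le (d-k)-2$, but deleting perfect matchings need not make $\lambda(G_k)$ shrink at all, so that argument collapses well before $k$ reaches $d$. My plan is therefore a two-phase argument: a \emph{bulk phase} that removes all but a small number of $1$-factors while maintaining strong structural control over the remainder, followed by an \emph{endgame} that uses a pre-reserved flexible subgraph to complete the decomposition exactly.

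For the bulk phase I would fix a small parameter $r=d^{1-\varepsilon/2}$ and first set aside a sparse random ``reservoir'' $F\subseteq G$ --- for instance a union of a few random perfect matchings, or a random subgraph of density $\Theta(r/d)$ --- chosen so that after a harmless re-regularization $G':=G\setminus F$ is still essentially an $(n,d',\lambda')$-graph with $d'\approx d-r$ and $\lambda'=O(d'^{\,1-\varepsilon})$, while $F$ itself remains pseudorandom. On $G'$ I would run a semi-random (Rödl nibble) / random-greedy process that extracts perfect matchings in batches: the expander mixing lemma pins down every neighbourhood and codegree in $G'$, so the auxiliary hypergraph whose edges are the currently available perfect matchings through a given edge is nearly regular with small pair degrees, and a Pippenger--Spencer / Kahn-type theorem then yields a near-complete packing of $E(G')$ into perfect matchings, leaving only an $O(r)$-regular remainder $H$. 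A crucial point to track, via martingale concentration applied along the nibble, is that $H$ is not merely sparse but ``spread out'': bounded maximum degree and quasirandom edge distribution.

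The endgame is the main obstacle. We are left with $H\cup F$, which is roughly $2r$-regular and must be split into exactly that many perfect matchings, and here expander arguments are useless because $H$ and $F$ are far too sparse to satisfy $\lambda\le\deg-2$. The way through is absorption: the reservoir $F$ should be engineered so that $F$ together with \emph{any} sufficiently spread-out $O(r)$-regular graph can be completed to a $1$-factorization of their union. Concretely, for a typical sparse quasirandom $F$ and a typical leftover $H$ coming out of the bulk phase, one would iteratively route the edges of $H$ into almost-perfect matchings of $F$ using Hall-type and rotation/switching arguments, at each step repairing one colour class into a genuine perfect matching without breaking the ones already fixed. The hard part is really threefold --- designing $F$ with exactly the right flexibility, certifying the structural niceness of $H$ produced by Phase~1, and proving the absorption lemma itself --- and this is also where the hypothesis $\lambda\le d^{1-\varepsilon}$ enters essentially: the polynomial gap provides the slack both for the concentration losses in the nibble and for the size of the reservoir, and improving it to $\lambda\le cd$ is exactly what forces the more efficient regular-bipartite-decomposition route taken in the present paper (where each regular bipartite piece is $1$-factorized for free by König's theorem).
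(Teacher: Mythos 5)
You correctly note that the present paper does not reprove Theorem~1.2: it cites Ferber--Jain, and then strictly supersedes it in Corollary~1.4, which follows in one line from Theorem~1.3 (decompose $G$ into $\log_2 d + O(1)$ regular bipartite spanning subgraphs, then $1$-factorize each piece by K\"onig's theorem). Your sketch is therefore a genuinely different route, and it has roughly the shape of the original Ferber--Jain argument: a semi-random bulk phase run against a pre-reserved flexible reservoir, followed by an exact endgame via absorption. Conceptually this is the right template, and you correctly identify the three load-bearing steps (designing the absorber $F$, certifying the quasirandomness of the nibble remainder $H$, and proving the absorption lemma). As written, though, this is a roadmap rather than a proof: all three steps are left as black boxes, and the absorption lemma in particular is the entire crux of the Ferber--Jain paper and cannot be waved through. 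The quantitative accounting is also unverified -- how $r = d^{1-\varepsilon/2}$, the nibble losses, and the hypothesis $\lambda \leq d^{1-\varepsilon}$ fit together, and whether the relevant spectral and degree conditions on $G'$ and $H$ can really be maintained through the semi-random phase -- and this is exactly where sketches of this kind usually die. Your opening observation is correct and worth keeping: the naive peel-one-matching-at-a-time argument stalls around $k \approx d/2$ because Weyl's inequality only gives $\lambda(G_k) \leq \lambda + k$, which is why something cleverer is needed. The comparison you draw at the end is also apt and is the most valuable part of your writeup: the nibble-plus-absorption template is generic and adapts to many completion problems, whereas the paper's regular-bipartite-decomposition route is both shorter and quantitatively stronger (handling $\lambda \leq d/12$) precisely because it sidesteps the endgame entirely -- every color class comes for free from K\"onig applied to a regular bipartite piece, so there is nothing left to absorb.
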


Another important decomposition problem which has received much attention is that of decomposing a graph into edge-disjoint \emph{Hamilton cycles}, which are  cycles passing through all the vertices. Hamilton cycles are one of the most studied objects in graph theory, and determining if a graph contains a single Hamilton cycle is an NP-complete problem. The existence of Hamilton cycles in $(n,d,\lambda)$-graphs was first studied by Krivelevich and Sudakov \cite{krivelevich2003sparse}, who proved that for sufficiently large $n$, any $(n,d,\lambda)$-graph with
\[
\lambda/d\leq \frac{(\log\log n)^2}{1000\log n(\log\log\log n)}
\]
has a Hamilton cycle. They also conjectured in the same paper that the condition can be reduced to $\lambda/d\leq c$ for some absolute constant $c>0$. After some intermediate results by Glock, Correia and Sudakov \cite{glock2023hamilton}, and Ferber, Han, Mao and Vershynin \cite{ferber2024hamiltonicity}, the conjecture was finally proved very recently by Dragani\'c, Montgomery,  Correia,  Pokrovskiy and Sudakov \cite{draganic2024hamiltonicity}. Regarding the corresponding decomposition problem, the best known result was given by K\"uhn and Osthus \cite{kuhn2013hamilton} (see also Theorem 1.11 in \cite{kuhn2014hamilton}), who proved that for any $\alpha>0$ and sufficiently large even integer $n$, any $(n,d,\lambda)$-graph with  $d\geq \alpha n$ and $\lambda\leq \varepsilon n$ for some constant $\varepsilon=\varepsilon(\alpha)$ can be decomposed into edge-disjoint Hamilton cycles.

We now state our main result, which says that for sufficiently large $d$, any $(n,d,\lambda)$-graph on an even number of vertices with $\lambda \leq \frac{d}{12}$ can be decomposed into $\log_2 d + O(1)$ regular bipartite spanning subgraphs. This assumes no relationship between $d$ and $n$ besides the trivial condition $d \leq n-1$, and the bound $\log_2 d + O(1)$ is best possible since any graph with chromatic number $\Omega(d)$, such as $K_{d+1}$, cannot be decomposed into fewer than $\log_2 d - O(1)$ bipartite subgraphs of any kind.

\begin{theorem}\label{main-thm}
    There exists a positive integer $d_0$ such that the following holds for all integers $d \geq d_0$. Let $n \geq d+1$ be an even integer, and let $G$ be an $(n,d,\lambda)$-graph on $[n]$ with $\lambda \leq \frac{d}{12}$. Then $G$ admits a decomposition into at most $\log_2 d + 36$ regular bipartite spanning subgraphs.
\end{theorem}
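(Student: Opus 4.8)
The plan is to imitate Kotzig's recursive bisection argument for $K_n$ (the proof of \Cref{thm:complete-graph}), replacing ``complete bipartite graph between the two halves'' by ``a regular bipartite subgraph extracted from the available cross-edges''; the pseudorandomness of $G$ is what makes the extraction possible. I would maintain, at stage $i$, a partition $\mathcal{P}_i$ of $[n]$ into $2^i$ parts of equal size together with the subgraph $G_i\subseteq G$ of edges not yet used, subject to the invariant that \emph{inside} each part of $\mathcal{P}_i$ the graph $G_i$ induces an $(n/2^i,d_i,\lambda_i)$-graph with $d_i$ close to $d/2^i$ and the ratio $\lambda_i/d_i$ bounded by a small absolute constant, while the edges of $G_i$ running between distinct parts of $\mathcal{P}_i$ form a ``leftover'' of small maximum degree.

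To go from stage $i$ to stage $i+1$: bisect each part $C\in\mathcal{P}_i$ into halves $C_0,C_1$ (a uniformly random balanced bisection when $d_i$ is comfortably larger than $\log n$, and an explicit almost-equitable bisection otherwise), so that every vertex has about $d_i/2$ neighbours across its own bisection and each $G_i[C_0,C_1]$ is a balanced, near-regular bipartite expander; then extract from $\bigcup_C G_i[C_0,C_1]$ a $k_{i+1}$-regular spanning bipartite subgraph $H_{i+1}$ with a common $k_{i+1}$ slightly below $d_i/2$. The key sub-lemma here is that a balanced bipartite graph which is near-regular with minimum degree at least $k$ and has a small normalised second eigenvalue contains a $k$-regular spanning subgraph; this follows from the defect form of Hall's theorem (equivalently the Gale--Ryser / max-flow condition $e(S,T)\ge k(|S|+|T|-|C_0|)$ for all $S\subseteq C_0,\ T\subseteq C_1$), verified via the expander mixing lemma. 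Taking $\mathcal{P}_{i+1}$ to consist of all the $C_0$'s and $C_1$'s and $G_{i+1}=G_i\setminus H_{i+1}$, one restores the invariant: the induced graphs $G_{i+1}[C_0]$ are pseudorandom of degree about $d_i/2$ (a random induced subgraph of an expander on roughly half of its vertices is again an expander with comparable normalised eigenvalue, indeed a smaller one once the degree is large), and the only new contribution to the leftover is $G_i[C_0,C_1]\setminus H_{i+1}$. I would run this for $L=\log_2 d-O(1)$ stages, stopping once $d_i$ has dropped to an absolute constant; by the Alon--Boppana bound, at that point each part of $\mathcal{P}_L$ is a pseudorandom graph of bounded degree on boundedly many vertices, and there are only finitely many such configurations, each finished off directly in $O(1)$ rounds, done in parallel across the parts. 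Finally one disposes of the accumulated leftover, which — because at each stage the surviving leftover edges are folded into the bipartite graph used at the next stage — never grows beyond bounded maximum degree, and so is absorbed into the last $O(1)$ regular bipartite spanning subgraphs. Summing, the number of regular bipartite spanning subgraphs is $\log_2 d+O(1)$, and chasing the constants gives the stated bound $\log_2 d+36$.

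The step I expect to be the main obstacle is the low-degree regime. While the current degree $d_i$ exceeds $\log n$, a random balanced bisection has all local cut-values concentrated around $d_i/2$, which is exactly what the extraction needs; once $d_i\lesssim\log n$ these fluctuations are of the same order as $d_i$, so a random bisection can leave a vertex with too few cross-neighbours and the regularization breaks down. There one must instead produce a genuinely almost-equitable balanced bisection of a pseudorandom graph — every vertex with within a constant of $d_i/2$ neighbours on each side — whose existence has to be established directly, or exploit that the Alon--Boppana bound already forces the vertex set to be bounded in that regime. A second, more routine difficulty is the bookkeeping: verifying that the pseudorandomness ratio $\lambda_i/d_i$ does not deteriorate over the $\log_2 d$ levels of the recursion (this is where the specific hypothesis $\lambda\le d/12$ is used, with room to spare), and keeping the folded leftover small enough that clearing it costs only the $O(1)$ hidden in the $+36$.
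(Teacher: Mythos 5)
Your plan is genuinely different from the paper's, but it has a real gap that the paper is specifically built to avoid: the spectral invariant you propose to maintain cannot survive $\log_2 d$ levels of recursion. The expander mixing lemma applied to the original $G$ gives $e_G(S,T) = \frac{d}{n}|S||T| \pm \lambda\sqrt{|S||T|}$, and with $\lambda$ of order $d$ the error term is only beaten by the main term when $\sqrt{|S||T|} = \Omega(n)$; after one bisection the parts have size $n/2$ and this is still fine, but two or more levels deep it is useless, so you are forced to claim (as you do) that the induced graphs $G_i[C]$ are themselves expanders with $\lambda_i/d_i$ bounded. Interlacing only gives $\lambda_i \le \lambda$, so $\lambda_i/d_i$ doubles at every level; the sharper statement you invoke (that a random half-induced subgraph of an expander has \emph{smaller} normalised second eigenvalue) picks up an additive fluctuation of order $\sqrt{d_i\log n}$, which overwhelms $d_i$ exactly in the low-degree regime you flag as the "main obstacle." Worse, the Alon--Boppana bound is read in the wrong direction in your proposal: for fixed $d_i=O(1)$ and $|C|=n/2^L \approx n/d \to\infty$ (which happens whenever $n\gg d$, allowed by the theorem), one has $\lambda(G_L[C])/d_L \ge 2\sqrt{d_L-1}/d_L - o(1)$, a constant bounded away from $0$; so the invariant is \emph{impossible} to maintain to constant degree, rather than the vertex set being forced to be bounded. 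Separately, the leftover bookkeeping does not close: the slack $G_i[C_0,C_1]\setminus H_{i+1}$ contributes maximum degree at least $\Omega(\sqrt{d_i\log n})$ at level $i$ (fluctuation of a random bisection plus rounding to a common $k_{i+1}$), and old leftover edges run between non-sibling cells of $\mathcal P_{i+1}$, so they cannot be "folded into" the next level's sibling cross-graph $G_{i+1}[C_0,C_1]$ — they genuinely accumulate, to $\Theta(\sqrt{d\log n})$ max degree, not $O(1)$.

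The paper sidesteps all of this by doing exactly \emph{one} bisection $\{X,Y\}$ and never recursing on the cross part. The whole bipartite graph $G[X,Y]$ is set aside as an absorber; inside $X$ and $Y$, the iterated random bisections (Lemmas~\ref{lem:good bisection of one half} and~\ref{lem:cleanup}) only require degree concentration, proved with Chernoff's bound and the Lov\'asz Local Lemma, with no spectral hypothesis on the induced subgraphs whatsoever. The spectral assumption $\lambda\le d/12$ is used \emph{only once}, via the expander mixing lemma on $G$ itself at scale $n/2$, to show $G[X,Y]$ is $(d,\rho,\alpha,\gamma)$-good (Lemma~\ref{lem:good induced bipartite subgraphs of expanders}); and then every almost-regular pair $H_{X,j},H_{Y,j}$ is corrected simultaneously by edge-disjoint $f$-factors $R'_j, R''_j$ inside the absorber (Lemma~\ref{lem:regularization}), with the slack from all $\log_2 d+O(1)$ levels handled there in one shot rather than accumulating level by level. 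A small technical step (Lemma~\ref{lem:subgraph with m edges} and Proposition~\ref{lem:same size refinements}) ensures $e(H_{X,j})=e(H_{Y,j})$ so that the two $f$-factors glue into a single regular bipartite graph with bipartition $\{X'_j\cup Y''_j,\,X''_j\cup Y'_j\}$, and the unused remainder of $G[X,Y]$ is automatically regular by edge-counting, giving the final $(+1)$-st piece. If you want a recursive-bisection proof to go through, you would have to restrict to the dense range $d=\Theta(n)$, where $d_i/|C_i|$ stays constant across levels and the original $\lambda$ keeps doing work; in the full range $d_0\le d\le n-1$ the single-bisection absorber structure seems essential.
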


Our proof of Theorem~\ref{main-thm} can be turned into a randomized algorithm in expected polynomial time for decomposing pseudorandom regular graphs into bipartite spanning graphs, by using the algorithmic Lovász Local Lemma \cite{moser2010constructive} and the polynomial-time algorithm for finding $f$-factors introduced in \cite{anstee1985algorithmic}.

Since any regular bipartite graph can be decomposed into perfect matchings, \Cref{main-thm} also shows that the conclusion of Theorem \ref{thm:1-factorization of pseudo} holds for $\lambda/d \leq c$ for some absolute constant $c > 0$, answering a question in \cite{ferber20181}. 

\begin{corollary}\label{cor:1-factorization}
There exists an integer $d_0$ such that the following holds for every integer $d\geq d_0$ and every even integer $n \geq d+1$. Let $G$ be an $(n,d,\lambda)$-graph with $\lambda \leq \frac{d}{12}$. Then $G$ admits a $1$-factorization.
\end{corollary}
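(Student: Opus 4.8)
The plan is to derive \Cref{cor:1-factorization} as an immediate consequence of \Cref{main-thm} together with the classical fact that every regular bipartite graph decomposes into perfect matchings (this is K\"onig's edge-colouring theorem; equivalently, one may repeatedly peel off a perfect matching using Hall's theorem). Accordingly I would take $d_0$ to be exactly the constant produced by \Cref{main-thm}, and assume $d \ge d_0$, that $n \ge d+1$ is even, and that $G$ is an $(n,d,\lambda)$-graph with $\lambda \le \tfrac{d}{12}$.

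First, apply \Cref{main-thm} to obtain a decomposition $E(G) = E(H_1)\sqcup\cdots\sqcup E(H_m)$ with $m \le \log_2 d + 36$, where each $H_j$ is an $r_j$-regular bipartite \emph{spanning} subgraph of $G$; after discarding any piece with $r_j = 0$ we may assume $r_j \ge 1$ for all $j$. Next, fix $j$ and apply K\"onig's theorem to the $r_j$-regular bipartite graph $H_j$: it admits a proper edge-colouring with exactly $r_j$ colours, and since $H_j$ is $r_j$-regular every colour class is a perfect matching of $H_j$, so $E(H_j) = M_{j,1}\sqcup\cdots\sqcup M_{j,r_j}$ with each $M_{j,s}$ a perfect matching of $H_j$. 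The key point is that $H_j$ is spanning, so $V(H_j) = V(G)$, and hence each $M_{j,s}$ is a perfect matching of $G$ itself. Finally, collecting the $M_{j,s}$ over all $j$ and all $s$ yields a partition of $E(G)$ into perfect matchings of $G$ — automatically $\sum_j r_j = d$ of them, by counting degrees at any vertex — that is, a $1$-factorization of $G$.

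The only things that genuinely need to be checked are bookkeeping: the $M_{j,s}$ are pairwise edge-disjoint, since the $H_j$ are edge-disjoint and within a fixed $H_j$ distinct colour classes are disjoint; and the step converting a perfect matching of $H_j$ into one of $G$ uses precisely that $H_j$ is spanning. I do not anticipate any real obstacle, since all of the difficulty of \Cref{cor:1-factorization} is already absorbed into \Cref{main-thm}. If one also wants the algorithmic statement, note that a proper edge-colouring of a regular bipartite graph can be computed in polynomial time, so composing this with the randomized procedure underlying \Cref{main-thm} produces a $1$-factorization of $G$ in expected polynomial time.
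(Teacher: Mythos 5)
Your argument is correct and is exactly the paper's own: apply \Cref{main-thm} to decompose $G$ into regular bipartite spanning subgraphs, then use K\"onig's theorem to split each such piece into perfect matchings of $G$ (possible precisely because each piece is spanning). The paper dispatches this in a single sentence preceding the corollary, so your write-up simply makes the same reasoning explicit.
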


The paper is organized as follows. At the end of this section, we sketch the proof of our main result (\Cref{main-thm}). In \Cref{section:preliminary}, we collect some standard tools in both probability and graph theory. 
In \Cref{sec:random bisections}, we outline the probabilistic portion of our argument, which uses nothing more than Chernoff's bounds and the Lov\'asz Local Lemma. This argument can produce a decomposition of any almost $d$-regular graph into $\log_2 d + O(1)$ \textit{almost} regular bipartite spanning subgraphs.
Next, in \Cref{sec:regularization}, we show how to use the expansion properties of our $(n,d,\lambda)$-graph $G$ guaranteed by the condition $\lambda \leq \frac{d}{12}$ to ``regularize'' the decomposition obtained in the previous section to give a decomposition of $G$ into regular bipartite spanning subgraphs. Finally, we combine these results to finish the proof of our main theorem in \Cref{section:main proof}. 

\subsection{Notation}
\indent

Throughout the paper, all graphs are simple and finite with vertex set $[n]$ unless specified otherwise. For a graph $G=(V,E)$, we write $e(G):=|E(G)|$. For a vertex subset $U \subseteq V(G)$, we write $e_G(U) := e(G[U])$ to denote the number of edges in $G$ with both endpoints in $U$. For two (not necessarily disjoint) vertex sets $A, B \subseteq V (G)$, we define $E_G(A,B)$ to be the set of all ordered pairs~$(x,y) \in V(G) \times V(G)$ with $xy\in E(G)$, $x\in A$, and $y\in B$. 
We denote $e_G(A,B):=|E_G(A,B)|$. In particular, we have $e_G(U,U) = 2e_G(U)$ for any $U \subseteq V(G)$. For two disjoint subsets $X,Y\subseteq V(G)$, we write $G[X,Y]$ to denote the induced bipartite subgraph on $G$ with parts $X$ and $Y$. 
We write~$\delta(G)$ to denote the minimum degree of $G$, and we write $\Delta(G)$ to denote its maximum degree. The neighborhood of a vertex $v \in V(G)$, denoted $N_G(v)$, is the set of vertices adjacent to $v$. 
Similarly, for a digraph~$D = (V,E)$, we define $N_D^+(v)$ to be the out-neighborhood of a vertex $v$, consisting of those vertices $u$ for which~${(v,u) \in E(D)}$. 
We write $a=b\pm c$ as a shorthand for the double-sided inequality $b-c\leq a\leq b+c$. For the cardinality of a set $S$, we use~$\#S$ and~$|S|$ interchangeably. Lastly, we write $Z \sim \mathrm{Bin}(n,p)$ to denote that $Z$ is a binomially distributed random variable with $\mathbb P(Z = k) = \binom{n}{k}p^k(1-p)^{n-k}$ for every integer $k \geq 0$.

\subsection{Proof outline}\label{section:outline}
\indent

We use a randomized algorithm to decompose the $(n,d,\lambda)$-graph $G$ into $\log_2 d + O(1)$ regular bipartite spanning subgraphs. Roughly speaking, we first set aside a specially chosen ``absorber'', then we decompose the remaining graph into \textit{almost} regular bipartite spanning subgraphs. Our absorber will simply be an induced balanced bipartite subgraph $G[X,Y]$, where the bipartition~$\{X,Y\}$ is chosen randomly so that $G[X,Y]$ is almost $\frac{d}{2}$-regular. The success of our procedure relies on the following remarkable property of $G$: for this choice of~$\{X,Y\}$, the graph $G[X,Y]$ can ``regularize'' just about any pair $H_X$, $H_Y$ of almost regular bipartite spanning subgraphs of~$G[X]$ and~$G[Y]$, respectively (see Lemmas~\ref{lem:good induced bipartite subgraphs of expanders} and \ref{lem:regularization}), in the sense that we can robustly find a sparse subgraph~$R$ of $G[X,Y]$ such that $H_X \cup H_Y \cup R$ is regular and bipartite. Therefore, after obtaining suitable decompositions $\{H_{X,j}\}_j$ and $\{H_{Y,j}\}_j$ of $G[X]$ and $G[Y]$, respectively, we can find edge-disjoint subgraphs $\{R_j\}_j$ of $G[X,Y]$ so that each $H_{X,j} \cup H_{Y,j} \cup R_j$ is a regular bipartite spanning subgraph of $G$. Since $G$ was regular to begin with, the remaining subgraph $H \subseteq G[X,Y]$ will be regular as well, so $\{H_{X,j} \cup H_{Y,j} \cup R_j\}_j \cup \{H\}$ will be a decomposition of $G$ into regular bipartite spanning subgraphs.

To illustrate the regularization step in more detail, suppose that $H_X$ and $H_Y$ have respective bipartitions $\{X', X''\}$ and $\{Y', Y''\}$ such that $|X'| = |Y'|$, $|X''| = |Y''|$, and both~$G[X',Y']$ and~$G[X'',Y'']$ have minimum degree at least $\frac{d}{5}$. Further suppose that $H_X$ and $H_Y$ have the same number of edges. A theorem of Ore (\Cref{thm:Ore})  generalizing Hall's matching theorem gives a necessary and sufficient condition for the graphs~$G[X', Y']$ and $G[X'', Y'']$ to have respective subgraphs $R'$ and $R''$ with prescribed degrees so that $H_X \cup H_Y \cup R' \cup R''$ is regular. This condition is easily verified, with room to spare, by the expander mixing lemma (\Cref{lem:expander mixing lemma}), using the assumption that~$\lambda \leq \frac{d}{12}$. Note that $H_X \cup H_Y \cup R' \cup R''$ will also be bipartite, with bipartition~$\{X' \cup Y'', X'' \cup Y'\}$. See Figure~\ref{fig:regularization} for an illustration.
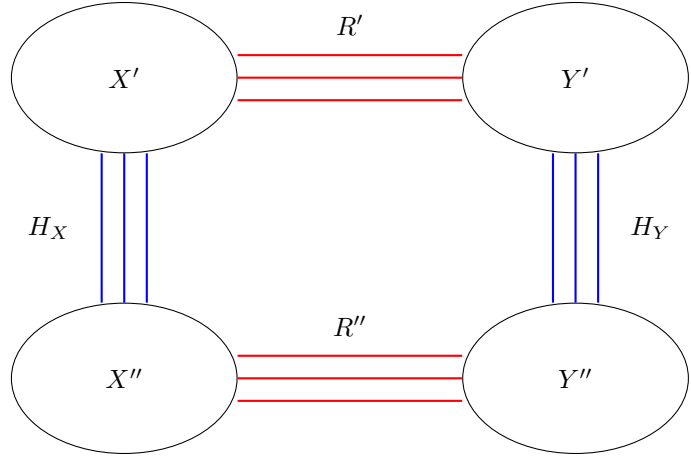
\begin{figure}
\centering
\begin{tikzpicture}[every node/.style={font=\small}]
    % Nodes for subgraphs with increased size
    \node[ellipse, draw, minimum width=3cm, minimum height=2cm] (X1) at (0, 4) {$X'$};
    \node[ellipse, draw, minimum width=3cm, minimum height=2cm] (Y1) at (6, 4) {$Y'$};
    \node[ellipse, draw, minimum width=3cm, minimum height=2cm] (X2) at (0, 0) {$X''$};
    \node[ellipse, draw, minimum width=3cm, minimum height=2cm] (Y2) at (6, 0) {$Y''$};

    % Labels for blue edge relations
    \node[left of=X1, yshift=-2cm] (Hx) {$H_{X}$};
    \node[right of=Y1, yshift=-2cm] (Hy) {$H_{Y}$};

    % Red Edges between X' and Y', and X'' and Y'' with labels
    \draw[red, thick] (X1.east) -- ++(0.6,0) -| (Y1.west);
    \draw[red, thick] ([yshift=0.3cm]X1.east) -- ++(0.6,0) -| ([yshift=0.3cm]Y1.west);
    \draw[red, thick] ([yshift=-0.3cm]X1.east) -- ++(0.6,0) -| ([yshift=-0.3cm]Y1.west);

  \node[right of=X1, xshift=2cm, yshift=0.7cm] {$R'$};
    
    \draw[red, thick] (X2.east) -- ++(0.6,0) -| (Y2.west);
    \draw[red, thick] ([yshift=0.3cm]X2.east) -- ++(0.6,0) -| ([yshift=0.3cm]Y2.west);
    \draw[red, thick] ([yshift=-0.3cm]X2.east) -- ++(0.6,0) -| ([yshift=-0.3cm]Y2.west);
\node[right of=X2, xshift=2cm, yshift=0.7cm] {$R''$};

    % Blue Edges between X' and X'', and Y' and Y'' with multiple lines
    \draw[blue, thick] (X1.south) -- ++(0, -0.6) -| (X2.north);
    \draw[blue, thick] ([xshift=-0.3cm]X1.south) -- ++(0, -0.6) -| ([xshift=-0.3cm]X2.north);
    \draw[blue, thick] ([xshift=0.3cm]X1.south) -- ++(0, -0.6) -| ([xshift=0.3cm]X2.north);
    
    \draw[blue, thick] (Y1.south) -- ++(0, -0.6) -| (Y2.north);
    \draw[blue, thick] ([xshift=-0.3cm]Y1.south) -- ++(0, -0.6) -| ([xshift=-0.3cm]Y2.north);
    \draw[blue, thick] ([xshift=0.3cm]Y1.south) -- ++(0, -0.6) -| ([xshift=0.3cm]Y2.north);
\end{tikzpicture}
\caption{Blue edges represent edges of $H_X$ and $H_Y$, and red edges represent edges in $R'$ and $R''$. $H_X\cup X_Y\cup R' \cup R''$ is a bipartite regular graph with bipartition $\{X'\cup Y'', X''\cup Y'\}$. }
\label{fig:regularization}
\end{figure}
Moreover, the graphs $R'$ and $R''$ can be found robustly, so that~$G[X,Y]$ can simultaneously regularize many pairs $H_X, H_Y$ in an edge-disjoint way. The fact that~$H_X$ and~$H_Y$ must have the same number of edges presents a minor technical issue, which we address in \Cref{sec:pairing}. Aside from this technicality, our problem is more or less reduced to finding randomized decompositions of $G[X]$ and $G[Y]$ into $\log_2 d + O(1)$ \textit{almost} regular bipartite spanning subgraphs each.

With the help of the Lov\'asz Local Lemma (Lemmas~\ref{lem:local-lemma} and \ref{lem:local-lemma-symmetric}), finding such decompositions is quite straightforward. To begin, our choice of $\{X,Y\}$ ensures that $G[X]$ and $G[Y]$ are each almost~$\frac{d}{2}$-regular. By iteratively taking cuts across random bisections of $X$ and $Y$, we can reduce the degree of each vertex by roughly a factor of $2$ with each step (see \Cref{lem:good bisection of one half}). After $\log_2 d - O(1)$ iterations, the remaining subgraphs $G_X \subseteq G[X]$ and $G_Y \subseteq G[Y]$ have constant maximum degree. With a constant number of additional random bisections, we can cover all of the edges in $G_X$ and~$G_Y$ by $O(1)$ bipartite subgraphs each (see \Cref{lem:cleanup}).

At last, we can use our absorber $G[X,Y]$ to simultaneously regularize all $\log_2 d + O(1)$ pairs of bipartite subgraphs covering $G[X] \cup G[Y]$ in an edge-disjoint way as described above, and thereby obtain our desired decomposition, completing the algorithm.

\section{Preliminaries}\label{section:preliminary}

\indent

In this section, we collect auxiliary results that will be used in the proof of our main results. 

\subsection{Probabilistic tools}
\indent

Throughout the paper, we will extensively use the following well-known Chernoff's bound (see, e.g., in \cite{alon2016probabilistic}).

\begin{theorem}[Chernoff's bounds]\label{chernoff}
	Let $X \sim \mathrm{Bin}\left(n,\frac{1}{2}\right)$. Then for any $t \geq 0$, we have
 \[\mathbb P\left(\left|X - \frac{n}{2}\right| \geq t\right) \leq 2\exp\left(-\frac{2t^2}{n}\right).\]
\end{theorem}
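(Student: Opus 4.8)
The plan is to use the Cram\'er--Chernoff exponential moment method, after which the bound drops out by a one-line optimization. First I would write $X = \sum_{i=1}^n X_i$ with the $X_i$ i.i.d.\ $\mathrm{Bernoulli}(1/2)$, and recenter by setting $Y_i := X_i - \tfrac12 \in \{-\tfrac12, +\tfrac12\}$, so that $S := X - \tfrac n2 = \sum_{i=1}^n Y_i$. Since $S$ and $-S$ are identically distributed, it suffices to bound the upper tail $\mathbb P(S \geq t)$ and then double; I would also dispose of the trivial case $t = 0$ immediately, where the claimed inequality just reads $1 \leq 2$. For $t > 0$ and any $s > 0$, Markov's inequality applied to the nonnegative variable $e^{sS}$ together with independence gives $\mathbb P(S \geq t) \leq e^{-st}\,\mathbb E[e^{sS}] = e^{-st}\prod_{i=1}^n \mathbb E[e^{sY_i}]$.

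The heart of the matter is the sub-Gaussian moment estimate $\mathbb E[e^{sY_i}] = \cosh(s/2) \leq e^{s^2/8}$, valid for every real $s$; this is the one place where an honest inequality, as opposed to algebraic bookkeeping, is needed, and I do not anticipate any further difficulty beyond it. I would prove it by a term-by-term comparison of power series: $\cosh(u) = \sum_{k \geq 0} \frac{u^{2k}}{(2k)!}$ and $e^{u^2/2} = \sum_{k \geq 0} \frac{u^{2k}}{2^k k!}$, and since $(2k)! = k!\,(k+1)(k+2)\cdots(2k) \geq 2^k k!$, each coefficient of $\cosh$ is dominated by the corresponding coefficient of $e^{u^2/2}$; substituting $u = s/2$ gives the claim. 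Plugging this into the previous display yields $\mathbb P(S \geq t) \leq e^{-st + n s^2/8}$ for all $s > 0$.

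Finally, I would optimize the exponent $\varphi(s) := -st + \tfrac{n}{8}s^2$ over $s > 0$: it is a convex parabola minimized at $s = 4t/n$, where $\varphi(4t/n) = -2t^2/n$. Hence $\mathbb P(S \geq t) \leq e^{-2t^2/n}$, and by the symmetry of $S$ noted at the outset, $\mathbb P\!\left(\left|X - \tfrac n2\right| \geq t\right) = \mathbb P(|S| \geq t) \leq \mathbb P(S \geq t) + \mathbb P(S \leq -t) = 2\,\mathbb P(S \geq t) \leq 2 e^{-2t^2/n}$, as desired. The argument is entirely standard — it is exactly Hoeffding's inequality specialized to $\{0,1\}$-valued summands — so the only real ``obstacle'' is remembering to recenter the $X_i$ and to invoke the convexity/series bound on $\cosh$ rather than attempting to estimate $\cosh(s/2)$ directly.
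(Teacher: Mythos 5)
Your proof is correct. The paper does not prove this bound; it simply cites it as a standard fact from Alon--Spencer, and your argument is exactly the textbook Cram\'er--Chernoff/Hoeffding derivation one would find there: recenter to $Y_i \in \{-\tfrac12,\tfrac12\}$, apply Markov to $e^{sS}$, bound the moment generating function by $\cosh(s/2)\le e^{s^2/8}$ via the termwise series comparison $(2k)!\ge 2^k k!$, optimize the exponent at $s=4t/n$ to get $e^{-2t^2/n}$ for one tail, and use the symmetry of $S$ (valid here because $p=\tfrac12$) to double. All the computations check out, including the edge case $t=0$ and the optimization $\varphi(4t/n)=-2t^2/n$, so nothing further is needed.
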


We will also make use of the following, known as the Lov\'asz Local Lemma (see, e.g., in \cite{alon2016probabilistic}). Before stating the lemma we need to introduce some notation: We say that a digraph $D$ (possibly with loops) is a \emph{dependency digraph} for a set $\mathcal A$ of events if $V(D) = \mathcal A$, and every event $A \in \mathcal A$ is mutually independent of the set $\mathcal A \setminus (\{A\} \cup N_D^+(A))$.

\begin{remark}\label{rem:dependencydigraph}
Let $\mathcal A$ be a set of events determined by a set $\mathcal Z = \{Z_1, \ldots, Z_k\}$ of independent random variables. Specifically, suppose that each event $A \in \mathcal A$ is determined by some subset~${\mathcal Z_A \subseteq \{Z_1, \ldots, Z_k\}}$. Then any digraph $D$ on $\mathcal A$ satisfying
\[\{A' : A' \neq A,\, \mathcal Z_{A'} \cap \mathcal Z_{A} \neq \emptyset\} \subseteq N_D^+(A)\]
for each event $A \in \mathcal A$ is a dependency digraph for $\mathcal A$.
\end{remark}

We are now ready to state the lemma. 

\begin{lemma}[Lov\'asz Local Lemma (asymmetric version)]\label{lem:local-lemma}
Let $(A_i)^n_{i=1}$ be a sequence of events in some probability space. Suppose that $D$ is a dependency graph for $(A_i)_{i=1}^n$, and suppose that there exist real numbers $(\alpha_i)^n_{i=1}$, such that $0 \leq \alpha_i < 1$, and
\[
\mathbb P\left(A_i\right)\leq \alpha_i\prod_{(A_i, A_j)\in E(D)}(1-\alpha_j).
\]
for all $1\leq i\leq n$. Then $\mathbb{P}\left(\bar{A}_i\right)\geq \prod_{i=1}^n(1-\alpha_i)$.
\end{lemma}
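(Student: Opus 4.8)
The plan is to run the standard induction underlying the Lovász Local Lemma. The crux is the strengthened statement: for every subset $S \subseteq \{1,\dots,n\}$ and every index $i \notin S$, the event $\bigcap_{j\in S}\bar A_j$ has positive probability and
\[
\mathbb P\!\left(A_i \,\middle|\, \bigcap_{j\in S}\bar A_j\right) \le \alpha_i .
\]
Granting this, the conclusion follows at once by the chain rule: ordering the events $A_1,\dots,A_n$ as given,
\[
\mathbb P\!\left(\bigcap_{i=1}^n \bar A_i\right)
= \prod_{i=1}^n \mathbb P\!\left(\bar A_i \,\middle|\, \bigcap_{j<i}\bar A_j\right)
\ge \prod_{i=1}^n (1-\alpha_i),
\]
where each conditional probability is well-defined and at least $1-\alpha_i$ by the strengthened statement applied with $S = \{1,\dots,i-1\}$.

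I would prove the strengthened statement by induction on $|S|$, carrying the positivity of $\mathbb P\!\left(\bigcap_{j\in S}\bar A_j\right)$ along as part of the inductive hypothesis. When $|S|=0$ the bound is exactly the assumption $\mathbb P(A_i) \le \alpha_i \prod_{(A_i,A_j)\in E(D)}(1-\alpha_j) \le \alpha_i$, since every factor $1-\alpha_j$ lies in $(0,1]$. For $|S|\ge 1$, partition $S = S_1 \sqcup S_2$ with $S_1 := \{\, j \in S : (A_i,A_j)\in E(D)\,\}$ (the out-neighbours of $A_i$ lying in $S$) and $S_2 := S \setminus S_1$. If $S_1 = \emptyset$ then $A_i$ is mutually independent of the family $\{A_j : j \in S\}$, so the conditional probability equals $\mathbb P(A_i)\le\alpha_i$. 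Otherwise $|S_2| < |S|$, and by Bayes' rule
\[
\mathbb P\!\left(A_i \,\middle|\, \bigcap_{j\in S}\bar A_j\right)
= \frac{\mathbb P\!\left(A_i \cap \bigcap_{j\in S_1}\bar A_j \,\middle|\, \bigcap_{j\in S_2}\bar A_j\right)}
       {\mathbb P\!\left(\bigcap_{j\in S_1}\bar A_j \,\middle|\, \bigcap_{j\in S_2}\bar A_j\right)} .
\]
For the numerator, drop the events $\bar A_j$ with $j\in S_1$ and use mutual independence of $A_i$ from $\{A_j : j\in S_2\}$ (these being indexed outside $N_D^+(A_i)$) to bound it by $\mathbb P(A_i) \le \alpha_i\prod_{(A_i,A_j)\in E(D)}(1-\alpha_j)$. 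For the denominator, enumerate $S_1 = \{j_1,\dots,j_r\}$ and expand by the chain rule as $\prod_{k=1}^r \mathbb P\!\left(\bar A_{j_k} \,\middle|\, \bigcap_{\ell<k}\bar A_{j_\ell}\cap\bigcap_{j\in S_2}\bar A_j\right)$; each conditioning set here has size at most $|S|-1$, so the inductive hypothesis makes each factor at least $1-\alpha_{j_k}$, and hence the denominator is at least $\prod_{k=1}^r(1-\alpha_{j_k})$. Since $S_1$ indexes a sub-collection of $N_D^+(A_i)$ and all factors are $\le 1$, we get $\prod_{k=1}^r(1-\alpha_{j_k}) \ge \prod_{(A_i,A_j)\in E(D)}(1-\alpha_j)$, so the ratio is at most $\alpha_i$, completing the inductive step. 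The positivity claim for $|S|=s$ follows from the $(s-1)$-case by the same chain-rule expansion, each factor being bounded below by the positive number $1-\alpha_j$.

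The only genuine subtlety, and the point I would be most careful about, is the bookkeeping needed to keep every conditional probability well-defined and to apply mutual independence in precisely the right place: one must verify at each stage that the conditioning event has strictly positive probability, and one must use independence of $A_i$ from the sub-family indexed by $S_2$ — that is, by indices avoiding $N_D^+(A_i)$ — rather than from all of $S$. Both points are handled uniformly by folding positivity into the induction and by invoking the definition of a dependency digraph, after which the displayed computation goes through verbatim.
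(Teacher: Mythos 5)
Your proof is correct and is the standard inductive argument for the asymmetric Lov\'asz Local Lemma. The paper itself does not prove this lemma; it is stated as a preliminary tool and attributed to a standard reference (Alon--Spencer), so there is no in-paper proof to compare against. Your write-up matches what one finds in that reference: strengthen to the conditional bound $\mathbb P(A_i \mid \bigcap_{j\in S}\bar A_j)\le\alpha_i$, induct on $|S|$ carrying positivity along, split $S$ into the out-neighbours $S_1$ and the rest $S_2$, bound the numerator via mutual independence of $A_i$ from the $S_2$-indexed events, and bound the denominator factor-by-factor via the inductive hypothesis, noting that dropping factors $1-\alpha_j\le 1$ only helps. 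One small remark: the conclusion as printed in the paper, $\mathbb P(\bar A_i)\ge\prod_{i=1}^n(1-\alpha_i)$, has a typo (the index $i$ is bound twice); the intended statement is $\mathbb P\bigl(\bigcap_{i=1}^n\bar A_i\bigr)\ge\prod_{i=1}^n(1-\alpha_i)$, as in the symmetric version, and that is exactly what you prove.
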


For certain applications, the following symmetric version of the Lov\'asz Local Lemma will be enough.

\begin{lemma}[Lov\'asz Local Lemma (symmetric version)]\label{lem:local-lemma-symmetric}
    Let $(A_i)^n_{i=1}$ be a sequence of events in some probability space. Suppose that $D$ is a dependency digraph for $(A_i)_{i=1}^n$ with maximum out-degree at most~$\Delta$. If~$
{\mathbb P\left(A_i\right)\leq \frac{1}{e(\Delta+1)}}$
for all $1\leq i\leq n$, then $\mathbb{P}\left(\bigcap_{i=1}^n\bar{A}_i\right)\geq \left(1-\frac{1}{\Delta+1}\right)^n$.
\end{lemma}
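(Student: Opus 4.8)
The plan is to prove the symmetric Lov\'asz Local Lemma as a direct corollary of the asymmetric version (\Cref{lem:local-lemma}), which I may assume as already established. The key observation is that in the symmetric setting we have a single uniform probability bound $\mathbb P(A_i) \leq \frac{1}{e(\Delta+1)}$ and a uniform degree bound (the maximum out-degree of the dependency digraph $D$ is at most $\Delta$), so it is natural to guess that a \emph{constant} choice $\alpha_i = \alpha$ for all $i$ will work, with $\alpha$ chosen in terms of $\Delta$ alone.

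First I would set $\alpha_i = \frac{1}{\Delta+1}$ for every $i$ (assuming $\Delta \geq 1$; the case $\Delta = 0$ means all events are mutually independent and is handled trivially, or one can just note $\alpha \in [0,1)$ still). Then I need to verify the hypothesis of \Cref{lem:local-lemma}, namely
\[
\mathbb P(A_i) \leq \alpha_i \prod_{(A_i,A_j)\in E(D)} (1-\alpha_j).
\]
Since each vertex $A_i$ has out-degree at most $\Delta$ in $D$, the product on the right has at most $\Delta$ factors, each equal to $1 - \frac{1}{\Delta+1}$, and since each such factor is in $(0,1)$, the product is at least $\left(1-\frac{1}{\Delta+1}\right)^{\Delta}$. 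Hence it suffices to check
\[
\frac{1}{e(\Delta+1)} \leq \frac{1}{\Delta+1}\left(1-\frac{1}{\Delta+1}\right)^{\Delta},
\]
which after cancelling $\frac{1}{\Delta+1}$ reduces to the elementary inequality $\left(1-\frac{1}{\Delta+1}\right)^{\Delta} \geq \frac{1}{e}$, equivalently $\left(1-\frac{1}{\Delta+1}\right)^{\Delta+1} \geq \frac{1}{e}\left(1-\frac{1}{\Delta+1}\right)$, which follows from the standard fact that $(1-1/m)^{m}$ is increasing in $m$ and bounded below by... more cleanly: $\left(1 - \frac{1}{\Delta+1}\right)^{\Delta+1} \ge \frac{1}{e}\cdot\frac{\Delta}{\Delta+1}$ is awkward, so instead I would use $1 - x \geq e^{-x/(1-x)}$ or simply the well-known bound $\left(1+\frac{1}{m}\right)^{m+1} \geq e$ with $m = \Delta$, which rearranges exactly to $\left(\frac{\Delta}{\Delta+1}\right)^{\Delta+1} \geq \frac{1}{e}\cdot\frac{\Delta}{\Delta+1}$... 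Let me instead state it as: $\left(1-\frac{1}{\Delta+1}\right)^\Delta \ge \frac1e$ because $\left(1-\frac{1}{\Delta+1}\right)^{\Delta+1}$ is increasing to $e^{-1}$ hence $\left(1-\frac{1}{\Delta+1}\right)^{\Delta} = \left(1-\frac{1}{\Delta+1}\right)^{\Delta+1}\big/\left(1-\frac{1}{\Delta+1}\right) \ge e^{-1}$ is not quite it either; the clean route is $(1-1/m)^{m-1} \ge 1/e$ for all $m\ge 1$, applied with $m = \Delta+1$, giving exactly $\left(1-\frac{1}{\Delta+1}\right)^{\Delta} \ge \frac{1}{e}$.

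Once the hypothesis is verified, \Cref{lem:local-lemma} yields
\[
\mathbb P\left(\bigcap_{i=1}^n \bar A_i\right) \geq \prod_{i=1}^n (1-\alpha_i) = \left(1-\frac{1}{\Delta+1}\right)^n,
\]
which is exactly the desired conclusion. I do not expect any real obstacle here: the only nontrivial point is the scalar inequality $\left(1-\frac{1}{\Delta+1}\right)^{\Delta} \geq \frac{1}{e}$, which is completely standard (it is the statement that $(1-1/m)^{m-1}$ decreases to $1/e$, or can be checked by taking logarithms and using $\ln(1-x) \ge -x/(1-x)$). A minor bookkeeping point is the degenerate case $\Delta = 0$, where the product over neighbors is empty, the hypothesis $\mathbb P(A_i) \le \frac{1}{e}$ combined with mutual independence of all the $A_i$ gives $\mathbb P(\bigcap \bar A_i) = \prod(1-\mathbb P(A_i)) \ge (1 - 1/e)^n$... actually to match the stated bound $\left(1 - \frac{1}{\Delta+1}\right)^n = 0^n$ one should just note the statement is vacuous or read $\Delta \ge 1$ tacitly; I would simply remark that we may assume $\Delta \ge 1$ since otherwise the events are independent and the claim is immediate with $(1-1/(\Delta+1))^n$ replaced by the trivially larger true bound, or absorb this by allowing the harmless substitution. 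This finishes the proof.
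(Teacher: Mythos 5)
Your derivation is correct. The paper does not actually prove \Cref{lem:local-lemma-symmetric} — it states both the asymmetric and symmetric local lemmas as standard results cited to \cite{alon2016probabilistic} — so there is no paper proof to compare against. Your argument (take $\alpha_i = \frac{1}{\Delta+1}$ uniformly in \Cref{lem:local-lemma}, bound the product over out-neighbors below by $\left(1-\frac{1}{\Delta+1}\right)^{\Delta}$, and invoke the scalar inequality $\left(1-\frac{1}{\Delta+1}\right)^{\Delta} \geq e^{-1}$, which follows from $\ln(1-x)\geq -x/(1-x)$) is precisely the textbook reduction of the symmetric LLL to the asymmetric one, and it delivers exactly the stated conclusion $\mathbb P\left(\bigcap_i \bar A_i\right) \geq \prod_i(1-\alpha_i) = \left(1-\frac{1}{\Delta+1}\right)^n$. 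The meandering aside on the degenerate case $\Delta=0$ could be tightened: there $\alpha_i=1$ is disallowed, but the claimed lower bound is $0^n\le \mathbb P(\cdot)$, which holds trivially for $n\geq 1$, so you can simply assume $\Delta\geq 1$ without loss.
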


\subsection{Tools in graph theory}
\indent

One of the most useful tools in spectral graph theory is the expander mixing lemma, which asserts that an $(n,d,\lambda)$-graph has the following expansion property (see, e.g., \cite{hoory2006expander}).

\begin{lemma}[Expander mixing lemma]\label{lem:expander mixing lemma}
    Let $G$ be an $(n,d,\lambda)$-graph. Then for any two subsets~${S,T \subseteq V(G)}$, we have
    \[\left|e_G(S,T) -  \frac{d}{n}|S||T|\right| \leq \lambda \sqrt{|S||T|}.\]
\end{lemma}

Next, we collect some tools for factors in graphs. The next theorem, due to Vizing \cite{vizing1964estimate}, shows that every graph $G$ admits a proper edge coloring using at most $\Delta(G)+1$ colors.

\begin{theorem}[Vizing's theorem]\label{thm:Vizing}
Every graph with maximum degree $\Delta$ can be properly edge-colored with $k \in \{
\Delta,\Delta +1\}$ colors.
\end{theorem}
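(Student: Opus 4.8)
The plan is to establish the nontrivial inequality $\chi'(G) \le \Delta + 1$ for the edge chromatic number $\chi'(G)$ of a simple graph $G$; since a vertex of degree $\Delta$ already forces $\chi'(G) \ge \Delta$ (as soon as $G$ has an edge), this yields $\chi'(G) \in \{\Delta, \Delta+1\}$, and one may then take $k = \chi'(G)$. I would induct on $e(G)$, the case $e(G) = 0$ being trivial. For the inductive step, delete an edge $uv$ and, by the inductive hypothesis, fix a proper edge coloring $c$ of $G - uv$ with the palette $\{1, \dots, \Delta+1\}$. Since every vertex $w$ has degree at most $\Delta < \Delta+1$, some color is \emph{missing} at $w$, i.e.\ absent from all edges at $w$; fix such a choice $m(w)$. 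The whole task is to recolor so that $uv$ too can be colored, that is, so that some color becomes missing at $u$ and at $v$ simultaneously.

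The tool is the \emph{Vizing fan} rooted at $u$: a maximal sequence $v = v_0, v_1, \dots, v_s$ of pairwise distinct neighbors of $u$ with $uv_0$ the uncolored edge and $c(uv_{i+1}) = m(v_i)$ for each $0 \le i < s$. If some color $\alpha$ is missing at both $u$ and $v_s$, I \emph{rotate} the fan: erase the colors on $uv_0, \dots, uv_s$, then set $c(uv_i) :=$ (old color of $uv_{i+1}$) for $i < s$ and $c(uv_s) := \alpha$. A short check shows properness is preserved — at each $v_i$ the edge $uv_i$ receives a color that was missing there, and $u$ only gains $\alpha$, which was missing at $u$ — so $G$ is now fully and properly colored, contradicting the supposed failure. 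Hence we may assume no color is missing at both $u$ and $v_s$; fix $\alpha$ missing at $u$ and $\beta$ missing at $v_s$, so $\alpha \ne \beta$, the color $\alpha$ appears at $v_s$, and $\beta$ appears at $u$.

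Now consider the subgraph $C_{\alpha\beta}$ formed by the edges colored $\alpha$ or $\beta$; every vertex has degree at most $2$ in it, so its components are paths and even cycles (Kempe chains), and both $u$ and $v_s$ are endpoints of path components since each misses one of the two colors. Depending on whether the chain through $u$ also contains $v_s$ — and, when it does not, on whether it passes through the fan vertex $v_j$ incident to a fan edge colored $\beta$, if such exists — one swaps the colors $\alpha \leftrightarrow \beta$ along a carefully chosen chain, truncates the fan at $v_j$ or keeps it at $v_s$, and thereby obtains a legal (possibly shorter) fan whose root and tail now share a missing color; a final rotation then completes the coloring and contradicts the assumed failure. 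I expect the case analysis of these Kempe swaps to be the main obstacle: one must verify in each branch that the swap keeps the coloring proper, that after truncation $u$ does not see $\beta$ on two distinct fan edges, and that the chain from $u$ cannot return to $u$ (it cannot, as $\alpha$ is missing at $u$). Once the sub-cases are organized so that every branch returns a valid fan with a free color at its tail, the induction closes.
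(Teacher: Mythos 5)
The paper does not prove Vizing's theorem; it is cited as a known result from Vizing's 1964 paper and used as a black box. Your sketch is the classical Vizing-fan argument, which is indeed the standard proof, so the \emph{approach} is the right one.

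That said, as written the argument has an unfinished core. You stop precisely at the step you yourself flag as ``the main obstacle'': the case analysis of the $(\alpha,\beta)$-Kempe swap. That analysis is not a routine verification but is the technical heart of the proof, and a few points need to be nailed down. First, your phrase ``the fan vertex $v_j$ incident to a fan edge colored $\beta$, \emph{if such exists}'' hides the key use of maximality: since $\beta$ is present at $u$ and the fan cannot be extended, maximality forces $\beta = c(uv_j)$ for some $1 \le j \le s$ — the existence of $v_j$ is a consequence you must derive, not a hypothesis you may assume. Second, after swapping $\alpha \leftrightarrow \beta$ along the chain, the set of colors missing at $u$ and at the fan vertices changes, so one must re-check that the (possibly truncated) fan $v_0,\dots,v_{j-1}$ (or $v_0,\dots,v_s$) is still a legal Vizing fan for the modified coloring before rotating; in particular one needs the fan edges $uv_1,\dots,uv_{j-1}$ to avoid both $\alpha$ and $\beta$ (which holds since $\alpha$ is missing at $u$ and $\beta$ appears only on $uv_j$ among the fan edges), and one needs $\beta$ to still be missing at the new tail. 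Third, the two subcases — the chain from $u$ reaches $v_{j-1}$ versus it does not, and in the latter case whether one truncates at $v_{j-1}$ or swaps the chain through $v_s$ and keeps the full fan — must each end with an explicit free color at the tail. None of this is hard, but ``one swaps \dots along a carefully chosen chain'' is a placeholder, not a proof; until those branches are written out and checked for properness, the induction does not close.
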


Just as a perfect matching (or $1$-factor) in a graph $G$ is a spanning subgraph in which every vertex has degree $1$, an \emph{$f$-factor} in $G$ is a spanning subgraph with degrees prescribed by some function $f$, as follows.

\begin{definition}[$f$-factor]\label{def: f-factor}
	Let $G$ be a graph, and let $f : V(G) \to \mathbb N$ be any function. An \emph{$f$-factor} in $G$ is a spanning subgraph $H$ of $G$ such that $\deg_H(v) = f(v)$ for every $v \in V(G)$.
\end{definition}

Generalizing Hall's theorem for perfect matchings (see, e.g., Theorem 3.1.11 in \cite{west2001introduction}), an equivalent condition for the existence of an $f$-factor in a bipartite graph was given by Ore \cite{ore1957graphs}.

\begin{theorem}[Ore's theorem]\label{thm:Ore}
	Let $H=(X\cup Y,E)$ be a bipartite graph, and let $f : V(H) \to \mathbb N$ be a function satisfying $f(X) = f(Y)$, where we use $f(S)$ to denote the sum $\sum_{v \in S} f(v)$ for $S \subseteq V(H)$. Then $H$ has an $f$-factor if and only if for every $S \subseteq X$ and $T \subseteq Y$, we have
	\begin{align}\label{eq:Ore}
 e_H(S,T) \geq f(S) + f(T) - f(X).
 \end{align}
\end{theorem}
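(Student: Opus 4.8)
My plan is to split the stated equivalence into the (easy) necessity direction and the (substantive) sufficiency direction, and to prove sufficiency via the max-flow min-cut theorem — equivalently, the integral form of Menger's theorem — which is exactly the tool that upgrades a Hall-type inequality into an existence statement.

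\emph{Necessity.} First I would check that an $f$-factor $F$ forces \eqref{eq:Ore}. Fix $S \subseteq X$ and $T \subseteq Y$. Since $H$ is bipartite, every edge of $F$ meeting $S$ has its other end in $Y$, so there are exactly $\sum_{v\in S}\deg_F(v) = f(S)$ of them. At most $e_H(S,T)$ of these end in $T$ (as $F \subseteq H$), and at most $f(Y \setminus T) = f(Y) - f(T) = f(X) - f(T)$ of them end in $Y \setminus T$. Adding, $f(S) \le e_H(S,T) + f(X) - f(T)$, which is exactly \eqref{eq:Ore}.

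\emph{Sufficiency.} Assuming \eqref{eq:Ore}, I would build a flow network $\mathcal N$ on vertex set $\{s,t\}\cup X\cup Y$ with arcs $(s,v)$ of capacity $f(v)$ for $v\in X$, arcs $(w,t)$ of capacity $f(w)$ for $w\in Y$, and an arc $(v,w)$ of capacity $1$ for every $vw \in E(H)$ (oriented from $X$ to $Y$). The first step is to observe that an integral $s$–$t$ flow of value $f(X)$ in $\mathcal N$ is the same thing as an $f$-factor of $H$: such a flow saturates every arc $(s,v)$, forcing $f(v)$ units through $v$; it saturates every arc $(w,t)$, because its value $f(X)=f(Y)$ equals the total capacity into $t$, forcing $f(w)$ units through $w$; and since each arc $(v,w)$ carries $0$ or $1$ unit, the edges carrying a unit form an $f$-factor. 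Conversely, an $f$-factor routes one unit along $s\to v\to w\to t$ per edge, giving a flow of value $f(X)$. The second step is to produce such a flow. The cut separating $s$ alone from the rest has capacity $\sum_{v\in X}f(v)=f(X)$, so the maximum flow value is at most $f(X)$; by max-flow min-cut it equals $f(X)$ as soon as every $s$–$t$ cut has capacity at least $f(X)$. Writing a cut's source side as $P$ and setting $S := X \setminus P$ and $T := Y \cap P$, its capacity is $f(S) + f(T) + e_H(X\setminus S, Y\setminus T)$, the three terms counting arcs $(s,v)$ with $v\in S$, arcs $(w,t)$ with $w\in T$, and arcs $(v,w)$ with $v\in X\setminus S$, $w\in Y\setminus T$. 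Replacing the pair $(S,T)$ by its complement $(X\setminus S,\,Y\setminus T)$ — which still ranges over all pairs — and using $f(X\setminus S) = f(X) - f(S)$ and $f(Y \setminus T) = f(Y)-f(T) = f(X) - f(T)$, the requirement ``capacity $\ge f(X)$'' becomes precisely $e_H(S,T) \ge f(S) + f(T) - f(X)$, i.e. \eqref{eq:Ore}. Finally, a network with integer capacities admits an integral maximum flow, so we obtain an integral flow of value $f(X)$, hence the desired $f$-factor.

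I do not expect a serious obstacle here; the delicate points are (i) remembering to invoke integrality of maximum flows, without which the flow would not restrict to an honest subgraph, and (ii) the bookkeeping that identifies $s$–$t$ cuts with pairs $(S,T)$ — in particular the complementation step that converts the min-cut inequality into the form of \eqref{eq:Ore}. It is worth noting that this is literally how Ore's theorem generalizes Hall's: taking $f\equiv 1$, condition \eqref{eq:Ore} becomes Hall's deficiency condition and $\mathcal N$ is the standard network reducing bipartite matching to max flow. One could instead argue purely combinatorially via the defect form of the K\H{o}nig--Hall theorem applied to a suitable blow-up of $H$, but the flow formulation keeps the arithmetic shortest.
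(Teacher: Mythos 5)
Your proof is correct, but there is nothing in the paper to compare it against: the paper states Ore's theorem as a known result, cites \cite{ore1957graphs}, and gives no proof. As a standalone argument your max-flow formulation is sound. The necessity direction correctly partitions the $f(S)$ edges of the $f$-factor incident to $S$ according to whether they land in $T$ or $Y\setminus T$. In the sufficiency direction, your network has the right structure, and your bookkeeping that identifies an $s$--$t$ cut with source side $P$ via $S := X\setminus P$, $T := Y\cap P$ (giving capacity $f(S) + f(T) + e_H(X\setminus S, Y\setminus T)$) and then complements to recover \eqref{eq:Ore} is exactly right; you also correctly flag that the integrality theorem for max flow is what turns an optimal flow into an honest subgraph, and that the value-$f(X)$ flow saturates the sink side because $f(X) = f(Y)$. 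Ore's original argument proceeds instead by replacing each vertex $v$ with $f(v)$ copies and invoking K\H{o}nig--Hall on the blown-up bipartite graph, which you mention as an alternative; the two routes are essentially equivalent in content, with yours keeping the counting in flow language. One purely cosmetic remark: after your substitution, the complemented pair $(X\setminus S, Y\setminus T)$ ranges over all subsets of $X\times$ subsets of $Y$ as $(S,T)$ does, so the quantifier structure matches \eqref{eq:Ore} exactly, which is the point you are implicitly using and could state a touch more explicitly.
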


\section{Almost regular decomposition via random bisections}\label{sec:random bisections}
\indent

The results of this section involve nothing more than routine applications of Chernoff's bounds (\Cref{chernoff}) and the Lov\'asz Local Lemma (Lemmas~\ref{lem:local-lemma} and \ref{lem:local-lemma-symmetric}). The basic underlying principle for each of these results is the following: in a random (nearly) balanced bipartition of the vertex set of an almost regular graph, one can ensure that the neighborhood of each vertex is nearly bisected as well. This principle is most simply illustrated by our first lemma.

\begin{lemma}\label{lem:almost regular bipartition}
	There exists a positive integer $d_0$ such that the following holds for all integers~$d \geq d_0$. Let $n \geq d+1$ be an even integer, and let $G$ be a $d$-regular graph on $[n]$. Then there exists a balanced bipartition $\{X,Y\}$ of $[n]$ such that for each graph $H \in \{G[X,Y], G[X], G[Y]\}$, we have~${\deg_H(v) = \frac{d}{2} \pm d^{2/3}}$ for every $v \in V(H)$. 
\end{lemma}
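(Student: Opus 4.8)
The plan is to choose the bipartition $\{X,Y\}$ uniformly at random among balanced bipartitions of $[n]$ (equivalently, sample a uniform random subset $X$ of size $n/2$), and then apply the Lov\'asz Local Lemma to show that with positive probability, every vertex has the desired degree in all three graphs $G[X,Y]$, $G[X]$, and $G[Y]$ simultaneously. For each vertex $v$, let $A_v$ be the ``bad'' event that $v$ fails to satisfy $\deg_H(v) = \frac{d}{2} \pm d^{2/3}$ for at least one $H \in \{G[X,Y], G[X], G[Y]\}$. Since $G$ is $d$-regular, the three degrees $\deg_{G[X,Y]}(v)$, $\deg_{G[X]}(v)$ (or $\deg_{G[Y]}(v)$, depending on the side of $v$) satisfy $\deg_{G[X]}(v) + \deg_{G[X,Y]}(v) = d$ when $v \in X$, so it suffices to control one of these; concretely, I would bound the probability that $|\{u \in N_G(v) : u \in X\}|$ deviates from $d/2$ by more than $d^{2/3}$, which controls all relevant degrees of $v$ up to additive error (the same count, or $d$ minus it, gives the degrees on both sides).

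The probability estimate: conditioned on which side $v$ itself lands, the number of neighbors of $v$ in $X$ is hypergeometric, close in distribution to $\mathrm{Bin}(d, 1/2)$. A minor nuisance is that sampling $X$ of exact size $n/2$ induces mild negative correlation rather than independence, so I would instead use a standard trick: either invoke a Chernoff-type bound for the hypergeometric distribution directly, or couple $X$ with an i.i.d.\ model by first including each vertex independently with probability $1/2$ and conditioning on the resulting set having size exactly $n/2$ (an event of probability $\Theta(n^{-1/2})$, which only costs a polynomial factor and is swallowed by the exponential bound). Either way, \Cref{chernoff} (or its hypergeometric analogue) gives $\mathbb P(A_v) \le 2\exp(-c\, d^{1/3})$ for an absolute constant $c > 0$, once $d$ is large; here I use that the deviation threshold is $d^{2/3}$ against a variance of order $d$, yielding exponent $\asymp (d^{2/3})^2 / d = d^{1/3}$.

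For the dependency structure, observe that $A_v$ is determined by the random variables $(\mathbf 1[u \in X])_{u \in N_G(v) \cup \{v\}}$. Hence by \Cref{rem:dependencydigraph}, $A_v$ and $A_w$ are adjacent in the dependency digraph only if $(N_G(v) \cup \{v\}) \cap (N_G(w) \cup \{w\}) \ne \emptyset$, i.e.\ $v$ and $w$ are at distance at most $2$ in $G$. Since $G$ is $d$-regular, the number of such $w$ is at most $d^2$, so the dependency digraph has maximum out-degree $\Delta \le d^2$. Then the symmetric Lov\'asz Local Lemma (\Cref{lem:local-lemma-symmetric}) applies provided $2\exp(-c\, d^{1/3}) \le \frac{1}{e(d^2+1)}$, which holds for all $d \ge d_0$ with $d_0$ a suitable absolute constant since $\exp(c\, d^{1/3})$ eventually dominates any polynomial in $d$. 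We conclude $\mathbb P\big(\bigcap_v \bar A_v\big) > 0$, so a good bipartition exists.

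The only genuine obstacle — and it is a mild one — is handling the slight dependence from fixing $|X| = n/2$ exactly; I expect the conditioning argument above (or a direct hypergeometric tail bound) to dispose of it cleanly, and everything else is a routine LLL computation. One should also double-check the bookkeeping that controlling $|N_G(v) \cap X|$ suffices to control $\deg_H(v)$ for all three choices of $H$ with the stated error term $d^{2/3}$, but this is immediate from $d$-regularity and the identities $\deg_{G[X]}(v) + \deg_{G[X,Y]}(v) = d$ for $v \in X$ and symmetrically for $v \in Y$.
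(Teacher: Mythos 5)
Your proposal correctly identifies the target distribution and the right tail-bound exponent, and the bookkeeping reducing the three degree conditions to controlling the single quantity $|N_G(v)\cap X|$ is fine. However, there is a genuine gap in the Lov\'asz Local Lemma application. You sample $X$ uniformly among subsets of size $n/2$ and then invoke \Cref{rem:dependencydigraph} to conclude that $A_v$ and $A_w$ are non-adjacent in the dependency digraph whenever $v,w$ are at distance more than $2$ in $G$. But \Cref{rem:dependencydigraph} requires the underlying random variables $Z_i$ to be \emph{independent}, and under uniform sampling of a fixed-size subset, the indicators $\mathbf 1[u\in X]$ are globally negatively correlated: for vertices $v,w$ with disjoint second neighborhoods, $|N_G(v)\cap X|$ and $|N_G(w)\cap X|$ are still dependent through the constraint $|X|=n/2$, so $A_v$ is not mutually independent of $\{A_w : \mathrm{dist}(v,w)>2\}$. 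The two workarounds you offer (a hypergeometric tail bound, or coupling with the i.i.d.\ model and conditioning on $|X|=n/2$) fix the \emph{marginal} estimate $\mathbb P(A_v)$ but do not repair the dependency digraph — the LLL needs the dependency structure, not just the individual tail bounds — and conditioning on exact balance destroys the independence that made the digraph bounded-degree in the first place.

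The paper circumvents exactly this obstacle with a different sampling scheme: fix any perfect matching $M=\{u_1v_1,\dots,u_{n/2}v_{n/2}\}$ of $K_n$ and, for each pair, independently put one endpoint in $X$ and the other in $Y$. This produces $n/2$ genuinely independent coin flips, so \Cref{rem:dependencydigraph} applies cleanly, and it automatically enforces $|X|=|Y|=n/2$ without any conditioning. The quantity $|N_G(v)\cap X|$ then becomes a shifted $\mathrm{Bin}(|M'_v|,\tfrac12)$ (where $M'_v$ is the set of matching edges with exactly one endpoint in $N_G(v)$), and the dependency digraph has out-degree at most $2d^2$, after which the symmetric LLL closes the argument. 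If you want to salvage your route, you would need to either switch to this (or a similar) product measure, or replace the vanilla LLL with a version that tolerates the weak global dependence of the hypergeometric model; as written, the proof does not go through.
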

\begin{proof}
    We choose $\{X,Y\}$ randomly according to some distribution on the balanced bipartitions of~$[n]$. Rather than use a uniform distribution, we instead determine $\{X,Y\}$ by the outcome of $\frac n2$ independent random choices, which will give us enough independence to use the symmetric version of the Lov\'asz Local Lemma (\Cref{lem:local-lemma-symmetric}). Let $M = \{u_1v_1, \ldots, u_{n/2}v_{n/2}\}$ be any fixed perfect matching in the underlying complete graph $K_n$. For each $i = 1,\ldots, \frac{n}{2}$, independently select exactly one of $u_i$ or $v_i$ to be in $X$ and the other to be in $Y$, with equal probability $\frac{1}{2}$. We will refer to this random decision as $s_i$.
	
	For each vertex $v \in [n]$, let $A_v$ be the event that $|N_G(v) \cap X| \notin \frac{d}{2} \pm d^{2/3}$. It suffices to show that for sufficiently large $d$, we have with positive probability that no $A_v$ occurs.
	
	Fix a vertex $v$. Let $M_v \subseteq M$ be the set of pairs in $M$ with both endpoints in $N_G(v)$. Let~${M'_v \subseteq M}$ be the set of pairs in $M$ with exactly one endpoint in $N_G(v)$. Each pair from $M_v$ will contribute~$1$ to $|N_G(v) \cap X|$ with probability $1$, and each pair from $M'_v$ will contribute $1$ or $0$ independently with probability $\frac{1}{2}$. Thus $|N_G(v) \cap X| - |M_v| \sim \mathrm{Bin}\left(|M'_v|, \frac{1}{2}\right)$. Since $d = 2|M_v| + |M'_v|$, we have by Chernoff's bounds (\Cref{chernoff}) that
 \[\mathbb P(A_v) = \mathbb P\left(\left||N_G(v) \cap X| - |M_v| - \frac{|M'_v|}{2}\right| > d^{2/3}\right) \leq 2\exp\left(-\frac{2d^{4/3}}{|M'_v|}\right) \leq 2\exp\left(-2d^{1/3}\right).\]
	
	We now construct a dependency digraph $D$ for the events $\{A_v\}$. An event $A_v$ is determined by the those decisions $s_i$ for which $\{u_i,v_i\} \in M'_v$. By Remark~\ref{rem:dependencydigraph}, we can thus construct $D$ by including an edge from $A_v$ to $A_{v'}$ whenever there is a sequence $(v,u,u',v')$ of vertices in $[n]$ such that $vu, u'v' \in E(G)$, and either $u=u'$ or $\{u,u'\} \in M$. Note that $\Delta := \Delta^+(D) \leq 2d^2$, so for sufficiently large $d$, we have
 \[\mathbb P(A_v) \leq 2\exp\left(-2d^{1/3}\right) \leq \frac{1}{e(\Delta+1)}\]
 for every event $A_v$. By the symmetric version of the Lov\'asz Local Lemma (\Cref{lem:local-lemma-symmetric}), we have with positive probability that no event $A_v$ occurs. This completes the proof.
\end{proof}

Our next lemma follows the same principle, only now we have two graphs: one bipartite graph~$H$ with bipartition $\{X,Y\}$, and another graph $G_X$ contained entirely within the part $X$. We wish to bisect $X$ in such a way that every $G_X$-neighborhood is nearly bisected, and also so that the~$H$-neighborhood of every vertex in $Y$ is nearly bisected. The latter condition will come up frequently, so we make the following definition.

\begin{definition}\label{def:good bipartition}
    Let $H = (X \cup Y, E)$ be a balanced bipartite graph on $[n] = X \cup Y$, and let $d > 0$. We say that a bipartition $\{X', X''\}$ of $X$ is \emph{$d$-good} with respect to $H$ if the following two conditions hold:
    \begin{enumerate}[label=(G\arabic*)]
        \item\label{(G1)} $\left||X'| -|X''|\right| \leq 1$;
        \item\label{(G2)} $|N_H(y) \cap X'|, |N_H(y) \cap X''| \geq d$ for every $y \in Y$.
    \end{enumerate}
    We define a \emph{$d$-good} bipartition of $Y$ with respect to $H$ analogously.
\end{definition}

\begin{lemma}\label{lem:good bisection of one half}
    There exists a positive integer $d_0$ such that the following holds for all $d \geq d_0$. Let~$0 < \epsilon \leq \frac{5}{8}$, and let $2^{18} \leq d' \leq d$. Let $n$ be a positive even integer, and let $H = (X \cup Y, E)$ be a balanced bipartite graph on $[n]$ satisfying $\deg_H(v) = \frac{d}{2} \pm d^{2/3}$ for every $v \in [n]$. Let $G_X$ be a graph on $X$ satisfying $\deg_{G_X}(x) = (1 \pm \epsilon)d'$ for every $x \in X$. Then there exists a bipartition $\{X', X''\}$ of~$X$ such that the following two properties hold:
    \begin{enumerate}[label=(L\arabic*)]
        \item\label{(L1)} the graph $H_X := G_X[X', X'']$ satisfies 
        \[\deg_{H_X}(x) = \left(1 \pm \frac{2}{(d')^{1/3}}\right)\frac{\deg_{G_X}(x)}{2}\]
        for every $x \in X$.
        \item\label{(L2)} $\{X', X''\}$ is $\frac{d}{5}$-good with respect to $H$.
    \end{enumerate}
\end{lemma}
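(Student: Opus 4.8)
The plan is to choose the bipartition $\{X', X''\}$ of $X$ randomly, using a source of independent random bits that gives enough independence to apply the symmetric Lov\'asz Local Lemma (\Cref{lem:local-lemma-symmetric}), exactly as in the proof of \Cref{lem:almost regular bipartition}. Concretely, I would fix an arbitrary near-perfect matching $M$ on the vertex set $X$ (one pair may be left unmatched if $|X|$ is odd), and for each pair independently flip a fair coin to decide which endpoint goes to $X'$ and which to $X''$; any leftover vertex is assigned to the smaller side, or by an extra independent coin flip. This automatically guarantees property \ref{(G1)} (the sides differ by at most one), so \ref{(G1)} is free and I only need to control degrees.

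For the degree conditions I would introduce, for each $x \in X$, a ``bad'' event $B_x$ that $\deg_{G_X[X',X'']}(x)$ deviates from $\tfrac{1}{(d')^{1/3}}$-relative error from $\tfrac{\deg_{G_X}(x)}{2}$, and for each $y \in Y$ a ``bad'' event $C_y$ that $\min(|N_H(y)\cap X'|, |N_H(y)\cap X''|) < \tfrac d5$. Each of these is a near-bisection event of the type handled by Chernoff's bound: writing $N_{G_X}(x)$ (resp.\ $N_H(y)\cap X$) as a disjoint union of matching-pairs fully inside the neighborhood (deterministic contribution), pairs with exactly one endpoint in the neighborhood (fair coin contributions), and possibly the one unmatched vertex, the random part is $\mathrm{Bin}(m, \tfrac12)$ for some $m$ with $m$ at least, say, $\deg_{G_X}(x) - o(\deg_{G_X}(x))$ — here I need the observation that $M$ contains at most one edge inside $N_{G_X}(x)$? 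No — that is false; rather I need that the number of matching pairs lying \emph{entirely} inside a neighborhood is at most half the neighborhood size, which is automatic, and this is enough because the deterministic contribution is split between $X'$ and $X''$ so it cancels and only widens by a factor at most $2$ the effective variance parameter. Then Chernoff gives $\mathbb P(B_x) \le 2\exp(-c (d')^{1/3})$ and $\mathbb P(C_y) \le 2\exp(-c' d)$ for absolute constants, using $\deg_H(y) = \tfrac d2 \pm d^{2/3} \geq \tfrac{2d}{5}$ so the target $\tfrac d5$ is comfortably below half the $H$-degree, and using $\epsilon \le \tfrac58$ and $d' \ge 2^{18}$ to make the relative-error bound $\tfrac{2}{(d')^{1/3}}$ achievable.

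Next I would build the dependency digraph following \Cref{rem:dependencydigraph}: $B_x$ depends only on coin flips $s_i$ for pairs of $M$ touching $N_{G_X}(x)$, and $C_y$ depends only on flips for pairs touching $N_H(y)\cap X$. Two events are adjacent only if their relevant vertex sets share a matched pair, which happens only if their ``centers'' ($x$ or $y$) are joined by a path of length $\le 3$ in the underlying graph $G_X \cup H$; since every vertex has degree $O(d)$ in this union, the maximum out-degree $\Delta$ of the dependency digraph is $O(d^2)$. Then $2\exp(-c(d')^{1/3}) \le 2\exp(-c \cdot 2^6) $ is \emph{not} automatically below $\tfrac{1}{e(\Delta+1)} = \Theta(d^{-2})$ — the bound $d' \ge 2^{18}$ only makes the \emph{error tolerance} work, not the LLL probability bound. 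To fix this I note that $\mathbb P(B_x) \le 2\exp(-c(d')^{1/3})$ can be improved: the relative deviation is $\tfrac{2}{(d')^{1/3}}$ times $\tfrac{\deg_{G_X}(x)}{2} = \Theta(d')$, so the absolute deviation is $\Theta((d')^{2/3})$, and $m = \Theta(d')$, giving $\mathbb P(B_x) \le 2\exp(-c(d')^{4/3}/d') = 2\exp(-c(d')^{1/3})$. Hmm, that is the same. So the genuine content is that $d'$ must be large relative to $\log d$; but $d'$ can be as small as $2^{18}$ while $d$ is huge, so $\exp(-c(d')^{1/3})$ need not beat $d^{-2}$. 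The resolution — and I expect this to be \textbf{the main subtlety} — is to use the \emph{asymmetric} LLL (\Cref{lem:local-lemma}) with different weights $\alpha_x$ for the $B_x$ events and $\alpha_y$ for the $C_y$ events: the $C_y$ events have tiny probability $\exp(-c'd)$ and can absorb the large number $O(d^2)$ of neighbors, while the $B_x$ events have probability $\exp(-c(d')^{1/3})$ but each $B_x$ has only $O((d')^2 + d')$ neighboring $B$-events (paths within $G_X$, which is $O(d')$-regular) plus $O(d')\cdot \text{(something)}$ — wait, $B_x$ can be adjacent to $C_y$ whenever a matched pair touches both $N_{G_X}(x)$ and $N_H(y)\cap X$, and since $M$ is a matching on $X$, a fixed pair $\{a,b\}\subseteq X$ lies in $N_H(y)\cap X$ for up to $d/2 + d^{2/3}$ vertices $y$; so $B_x$ has up to $O(d' \cdot d)$ neighboring $C$-events. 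That is still $O(d'd)$, not $O((d')^2)$. Setting $\alpha_x \approx \exp(-c(d')^{1/3}/2)$ and $\alpha_y \approx \exp(-c'd/2)$, one checks $\mathbb P(B_x) \le \alpha_x \prod(1-\alpha_{\cdot})$ reduces to $\exp(-c(d')^{1/3}) \lesssim \exp(-c(d')^{1/3}/2)\cdot \exp(-O(d'd)\alpha_y - O((d')^2)\alpha_x)$, and since $\alpha_y$ is super-exponentially small in $d$ the term $O(d'd)\alpha_y \to 0$, while $O((d')^2)\alpha_x = O((d')^2)\exp(-c(d')^{1/3}/2)$ is small for $d' \ge 2^{18}$ large enough; similarly for $C_y$. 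So I would carry out this two-weight asymmetric LLL computation carefully, choosing $d_0$ and verifying the two inequalities, and conclude that with positive probability no $B_x$ or $C_y$ occurs, yielding a bipartition satisfying \ref{(L1)} and \ref{(L2)}.
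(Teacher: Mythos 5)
Your proposal follows the paper's proof essentially exactly: the same random matching-based bisection of $X$, the same two families of bad events, the same Chernoff estimates, and — crucially — the same recognition that the symmetric Lov\'asz Local Lemma fails because $d'$ may be a fixed constant ($2^{18}$) while the dependency degree scales as $d^2$, forcing the asymmetric version with different weights for the $G_X$-degree events and the $H$-degree events.

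One small refinement worth noting: the paper sets the weight on the $G_X$-side to $\alpha = \tfrac{1}{12(d')^2}$, a \emph{polynomially} small quantity chosen precisely to cancel the $\Theta((d')^2)$ count of neighboring $G_X$-events, so that $(1-\alpha)^{6(d')^2}$ is bounded below by an absolute constant ($\tfrac12$). Your choice $\alpha_x \approx \exp(-c(d')^{1/3}/2)$ also works, but barely, and your justification is not quite right at the boundary $d' = 2^{18}$: there $(d')^2 = 2^{36}$ while $\alpha_x \approx e^{-24}$, so $6(d')^2\alpha_x \approx 15.6$ — not ``small'' as you claim. The LLL inequality still goes through only because the remaining slack $c(d')^{1/3}/2 - \ln 2 \approx 23.3$ exceeds $15.6$, a near-miss that deserves to be computed rather than waved at. The paper's polynomial weight avoids this fragility entirely and is the cleaner choice; with your exponential weight you would need to carry out the numerics at $d' = 2^{18}$ explicitly to close the argument.
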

\begin{proof}
    We choose $\{X', X''\}$ randomly according to the following distribution, similar to the proof of \Cref{lem:almost regular bipartition}. Let 
    \[\tilde X := \left\{\begin{array}{l l}
    X &\text{if $|X|$ is even}, \\
    X \cup \{\tilde v\} & \text{if $|X|$ is odd},
    \end{array}\right.\]
    where $\tilde v \notin [n]$ is a dummy vertex. Let $M = \{u_1v_1, \ldots, u_{\lceil n/4 \rceil} v_{\lceil n/4 \rceil}\}$ be a matching in the underlying complete graph on $\tilde X$. For each $i = 1, \ldots, \left\lceil \frac{n}{4} \right\rceil$, we independently select exactly one of $u_i$ or $v_i$ to be in $X'$ and the other to be in $X''$, with equal probability $\frac{1}{2}$. As an exception to this rule, the dummy vertex $\tilde v$ is never added to $X'$ or $X''$, so we have that $\{X', X''\}$ is a bipartition of $X$.  
    We denote this random decision by $s_i$.
    
    For $x \in X$, let $A_x$ denote the event that $|N_{G_X}(x) \cap X'| \notin \left(1 \pm \frac{2}{(d')^{1/3}}\right)\frac{\deg_{G_X}(x)}{2}$, and for $y \in Y$, let~$B_y$ denote the event that $|N_H(y) \cap X'| \notin \frac{\deg_H(y)}{2} \pm \left(\frac{d}{20} - \frac{d^{2/3}}{2}\right)$. We will show using the asymmetric version of the Lov\'asz Local Lemma that with positive probability, none of these events occur. 

    For $x \in X$, let $M_x \subseteq M$ denote the set of pairs in $M$ with both endpoints in $N_{G_X}(x)$, and let~$M_x' \subseteq M$ denote the set of pairs in $M$ with exactly one endpoint in $N_{G_X}(x)$. Define $M_y$ and $M_y'$ analogously for $y \in Y$.

    Fix a vertex $x \in X$. Each pair from $M_x$ will contribute $1$ to $|N_{G_X}(x) \cap X'|$ with probability $1$, and each pair from $M'_x$ will contribute $1$ or $0$ independently with probability $\frac{1}{2}$. Thus 
    \[{|N_{G_X}(x) \cap X'| - |M_x| \sim \mathrm{Bin}\left(|M'_x|, \frac{1}{2}\right)}.\] Since $\deg_{G_X}(x) = 2|M_x| + |M'_x|$ and $\epsilon \leq \frac{5}{8}$, we have by Chernoff's bounds (\Cref{chernoff}) that
    \begin{align*}
        \mathbb P\left(A_x\right) &= \mathbb P\left(\left||N_{G_X}(x) \cap X'| - |M_x| - \frac{|M'_x|}{2}\right| > \frac{\deg_{G_X}(x)}{(d')^{1/3}}\right) \\
        &\leq 2\exp\left(-\frac{2(\deg_{G_X}(x))^2}{(d')^{2/3}|M'_x|}\right) \\
        &\leq 2\exp\left(-\frac{2\deg_{G_X}(x)}{(d')^{2/3}}\right) \\
        &\leq 2\exp\left(-\frac{3(d')^{1/3}}{4}\right).
    \end{align*}

    We similarly have for each $y \in Y$ that 
    \[|N_H(y) \cap X'| - |M_y| \sim \mathrm{Bin}\left(|M'_y|, \frac{1}{2}\right),\] and~${\deg_H(y) = 2|M_y| + |M'_y|}$, so
    \begin{align*}
        \mathbb P(B_y) &= \mathbb P\left(\left||N_H(y) \cap X'| - |M_y| - \frac{|M_y'|}{2}\right| > \frac{d}{20} - \frac{d^{2/3}}{2}\right)\\
        &\leq 2\exp\left(-\frac{2\left(\frac{d}{20} - \frac{d^{2/3}}{2}\right)^2}{\frac{d}{2} + d^{2/3}}\right) \\
        &\leq \exp\left(-\frac{d}{120}\right)
    \end{align*}
    for sufficiently large $d$.
    
    Note that each event $A_x$ is determined by those decisions $s_i$ for which $\{u_i, v_i\} \cap N_{G_X}(x) \neq \emptyset$, and each event $B_y$ is determined by those decisions $s_i$ for which $\{u_i, v_i\} \cap N_H(y) \neq \emptyset$. Therefore, since $G_X$ has maximum degree at most $(1+\epsilon)d'$, and $H$ has maximum degree at most $\frac{d}{2} + d^{2/3}$, it follows from \Cref{rem:dependencydigraph} that there exists a dependency digraph $D$ for $\{A_x : x \in X\} \cup \{B_y : y \in Y\}$ with
    \begin{align*}
        \#\{x' : (A_x, A_{x'}) \in E(D)\} &\leq 2(1+\epsilon)^2(d')^2 \leq 6(d')^2 & \text{for each $x \in X$}; \\
        \#\{y : (A_x, B_y) \in E(D)\} &\leq 2(1+\epsilon)d'\left(\frac{d}{2} + d^{2/3}\right) \leq 2d'd & \text{for each $x \in X$}; \\
        \#\{x : (B_y, A_x) \in E(D)\} &\leq 2(1+\epsilon)d'\left(\frac{d}{2} + d^{2/3}\right) \leq 2d'd & \text{for each $y \in Y$}; \\
        \#\{y' : (B_y, B_{y'}) \in E(D)\} &\leq 2\left(\frac{d}{2} + d^{2/3}\right)^2 \leq d^2 & \text{for each $y \in Y$}.
    \end{align*}
    Let $\alpha := \frac{1}{12(d')^2}$, and let $\beta := \frac{1}{d^3}$. For every $x \in X$, we have for sufficiently large $d$ that
    \begin{align*}
        \alpha(1-\alpha)^{6(d')^2}(1-\beta)^{2d'd} &\geq \alpha\left(1 - 6\alpha(d')^2\right)\left(1-2\beta d'd\right) \\
        &\geq \frac{1}{12(d')^2}\left(1 - \frac{1}{2}\right)\left(1 - \frac{2}{d}\right) \\
        &\geq 2\exp\left(-\frac{3(d')^{1/3}}{4}\right) \\
        &\geq \mathbb P(A_x),
    \end{align*}
    since $2^{18} \leq d' \leq d$. We also have for every $y \in Y$ and sufficiently large $d$ that
    \begin{align*}
        \beta(1-\alpha)^{2d'd}(1-\beta)^{d^2} &\geq \beta\left(1-2\alpha d' \right)^d\left(1-\beta d^2\right) \\
        &\geq \frac{1}{d^3}\left(1-\frac{1}{6 \cdot 2^{18}}\right)^d\left(1 - \frac{1}{d}\right) \\
        &\geq \exp\left(-\frac{d}{120}\right) \\
        &\geq \mathbb P(B_y).
    \end{align*}
    Therefore, by the asymmetric version of the Lov\'asz Local Lemma (\Cref{lem:local-lemma}), we have with positive probability that no event $A_x$ nor $B_y$ occurs. The fact that no $A_x$ occurs ensures \ref{(L1)}, and the fact that no $B_y$ occurs ensures \ref{(G2)} with $\frac{d}{5}$ in place of $d$. Note that \ref{(G1)} is satisfied automatically by design, so \ref{(L2)} is satisfied. This completes the proof.
\end{proof}

After applying \Cref{lem:almost regular bipartition} to bisect the vertex set of our $(n,d,\lambda)$-graph $G$ into $\{X,Y\}$, \Cref{lem:good bisection of one half} can be iterated to efficiently pull out edge-disjoint almost regular bipartite subgraphs from $G[X]$ until the remaining subgraph $G_X$ of $G[X]$ has constant maximum degree. The following lemma allows us to decompose this remainder $G_X$ into a constant number of bipartite subgraphs, each with a bipartition which is $\frac{d}{5}$-good with respect to $G[X,Y]$. To accomplish this, we take constantly many independent random bisections~$\left\{\{X'_j, X''_j\}\right\}_j$ of $X$ so that each vertex in $Y$ has large $G$-degree in each~$X'_j$ and~$X''_j$, and also so that each $G_X$-edge crosses at least one of the bisections $\{X'_j, X''_j\}$. Thus there exists a decomposition of $G_X$ into bipartite subgraphs with these bipartitions. Since each bipartite subgraph obtained in this way will have bounded maximum degree, we do not need to concentrate the $G_X$-degrees in these bisections in order to be able to apply our regularization step in \Cref{sec:regularization} (specifically, \ref{(A1)} in \Cref{lem:regularization} will be satisfied automatically).

\begin{lemma}\label{lem:cleanup}
    There exists a positive integer $d_0$ such that the following holds for all $d \geq d_0$. Let $n$ be a positive even integer, and let $H = (X \cup Y, E)$ be a balanced bipartite graph on $[n]$ satisfying $\deg_H(v) = \frac{d}{2} \pm d^{2/3}$ for every $v \in [n]$. Let $G_X$ be a graph on $X$ with maximum degree at most $\Delta$, where $1 \leq \Delta \leq d$. Then there exist $k := \lceil \log_2 \Delta \rceil + 8$ bipartitions $\{X'_1, X''_1\}, \ldots, \{X'_k, X''_k\}$ of $X$ such that the following two properties hold:
    \begin{enumerate}[label=(M\arabic*)]
        \item\label{(M1)} for every edge $e \in E(G_X)$, there is at least one index $j \in [k]$ such that $e$ crosses $\{X'_j, X''_j\}$;
        \item\label{(M2)} for each $1 \leq j \leq k$, $\{X'_j, X''_j\}$ is $\frac{d}{5}$-good with respect to $H$.
    \end{enumerate}
\end{lemma}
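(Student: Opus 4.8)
The plan is to produce the bisections at random using the same mechanism as in the proofs of \Cref{lem:almost regular bipartition} and \Cref{lem:good bisection of one half}, and to rule out all ``bad'' events with the asymmetric Lov\'asz Local Lemma (\Cref{lem:local-lemma}). Fix a perfect matching $M = \{u_1v_1, \dots, u_m v_m\}$ of the complete graph on $\tilde X$, where $\tilde X := X$ if $|X|$ is even and $\tilde X := X \cup \{\tilde v\}$ for a dummy vertex $\tilde v \notin [n]$ otherwise, so $m = \lceil |X|/2 \rceil$. Let $(s_{i,j})_{i \in [m],\, j \in [k]}$ be independent fair coins, and for each $j \in [k]$ set $X'_j := \{u_i : s_{i,j} = 0\} \cup \{v_i : s_{i,j} = 1\}$ (discarding $\tilde v$) and $X''_j := X \setminus X'_j$. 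Then $\bigl| |X'_j| - |X''_j| \bigr| \le 1$ automatically, so each $\{X'_j, X''_j\}$ satisfies \ref{(G1)}, and it remains to control two families of events. For $y \in Y$ and $j \in [k]$, let $B_{y,j}$ be the event that $|N_H(y) \cap X'_j| \notin \frac{\deg_H(y)}{2} \pm \bigl( \frac{d}{20} - \frac{d^{2/3}}{2} \bigr)$; writing $M_y$ and $M'_y$ for the sets of $M$-pairs with both, respectively exactly one, endpoint in $N_H(y)$, we have $|N_H(y) \cap X'_j| - |M_y| \sim \mathrm{Bin}(|M'_y|, \frac{1}{2})$ and $\deg_H(y) = 2|M_y| + |M'_y|$, so by Chernoff's bound (exactly as in the proof of \Cref{lem:good bisection of one half}) $\mathbb P(B_{y,j}) \le e^{-d/120}$, and non-occurrence of $B_{y,j}$ forces $|N_H(y) \cap X'_j|, |N_H(y) \cap X''_j| \ge \frac{d}{5}$, i.e.\ \ref{(G2)} holds for $\{X'_j, X''_j\}$ with $\frac{d}{5}$ in place of $d$. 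For $e = xx' \in E(G_X)$, let $C_e$ be the event that $e$ crosses none of $\{X'_1, X''_1\}, \dots, \{X'_k, X''_k\}$; if $x$ and $x'$ lie in a common pair of $M$ then $C_e$ cannot occur, while otherwise $e$ crosses each $\{X'_j, X''_j\}$ independently with probability $\frac{1}{2}$, so $\mathbb P(C_e) \le 2^{-k}$. The additive constant $8$ in $k$ is chosen precisely so that $2^k \ge 256 \cdot 2^{\lceil \log_2 \Delta \rceil} \ge 256\,\Delta$, whence $\mathbb P(C_e) \le \frac{1}{256\Delta}$. It now suffices to show that with positive probability none of the events $B_{y,j}$ or $C_e$ occurs.

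By \Cref{rem:dependencydigraph}, take a dependency digraph $D$ on $\{B_{y,j}\} \cup \{C_e\}$ in which two events are joined whenever they are determined by a common coin $s_{i,j}$: here $B_{y,j}$ depends only on the coins $s_{i,j}$ for the (at most $\deg_H(y) \le \frac{d}{2} + d^{2/3}$) $M$-pairs $\{u_i, v_i\}$ meeting $N_H(y)$, and $C_e$, when it has positive probability, depends only on the coins $s_{i,j'}$, over all $j' \in [k]$, for the two $M$-pairs containing $x$ and $x'$. A routine count using $\Delta(G_X) \le \Delta$ and $\deg_H(v) \le \frac{d}{2} + d^{2/3}$ for all $v \in [n]$ yields out-degree bounds
\[
N_{BB} \le d^2, \qquad N_{BC} \le (d + 2d^{2/3})\Delta, \qquad N_{CB} \le (2d + 4d^{2/3})k, \qquad N_{CC} \le 4\Delta,
\]
where $N_{BB}$ (resp.\ $N_{BC}$) bounds the number of arcs from a fixed $B_{y,j}$ to other $B$-events (resp.\ to $C$-events), and similarly $N_{CB}, N_{CC}$ for the arcs out of a fixed $C_e$. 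Apply \Cref{lem:local-lemma} with weight $x_C := \frac{1}{128\Delta}$ on every $C_e$ and weight $x_B := d^{-3}$ on every $B_{y,j}$. Since $x_B = d^{-3}$ while $N_{BB} \le d^2$ and $N_{CB} = O(d \log d)$, the factors $(1 - x_B)^{N_{BB}}$ and $(1 - x_B)^{N_{CB}}$ tend to $1$ as $d \to \infty$, and $(1 - x_C)^{N_{CC}} \ge (1 - \frac{1}{128\Delta})^{4\Delta} \ge \frac{9}{10}$; the local-lemma inequality required for the $C_e$'s, namely $\mathbb P(C_e) \le x_C (1 - x_C)^{N_{CC}} (1 - x_B)^{N_{CB}}$, then follows from $\frac{1}{256\Delta} \le (1 - o(1)) \cdot \frac{9}{10} \cdot \frac{1}{128\Delta}$ for large $d$.

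The delicate point — and the step I expect to be the main obstacle — is the inequality $\mathbb P(B_{y,j}) \le x_B (1 - x_B)^{N_{BB}} (1 - x_C)^{N_{BC}}$: since $N_{BC}$ is of order $d\Delta$, the factor $(1 - x_C)^{N_{BC}}$ is only of order $e^{-\Theta(d)}$, comparable to the bound $e^{-d/120}$ on $\mathbb P(B_{y,j})$ itself, so the constants must be tracked carefully. One has $x_C N_{BC} \le \frac{(d + 2d^{2/3})\Delta}{128\Delta} = \frac{d}{128} + o(d)$, hence $(1 - x_C)^{N_{BC}} \ge \exp\bigl( -\frac{d}{128} - o(d) \bigr)$, and since $128 > 120$ this exceeds $e^{-d/120} \ge \mathbb P(B_{y,j})$ for all large $d$; indeed $\mathbb P(B_{y,j}) / (1 - x_C)^{N_{BC}} \le e^{-d(1/120 - 1/128) + o(d)} = e^{-\Omega(d)}$. (This is exactly where the additive constant $8$ is used: it gives $2^8 = 256 > 2 \cdot 120$, which is what makes $x_C N_{BC} < d/120$; a constant of $7$ would not suffice for this part.) The required inequality thus reduces to $x_B (1 - x_B)^{N_{BB}} \ge \mathbb P(B_{y,j}) / (1 - x_C)^{N_{BC}}$, whose left side is $\ge \frac{1}{2} d^{-3}$ and whose right side is $e^{-\Omega(d)}$, so it holds once $d$ is large enough. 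With both families of local-lemma inequalities verified, \Cref{lem:local-lemma} produces an outcome of the coins for which no $B_{y,j}$ and no $C_e$ occurs; the resulting bisections $\{X'_1, X''_1\}, \dots, \{X'_k, X''_k\}$ satisfy \ref{(M2)} (each is $\frac{d}{5}$-good with respect to $H$, combining \ref{(G1)} with the absence of the $B_{y,j}$'s) and \ref{(M1)} (every edge of $G_X$ crosses some $\{X'_j, X''_j\}$, by the absence of the $C_e$'s). Choosing $d_0$ large enough to absorb all the $o(d)$ and $1 - o(1)$ estimates completes the proof.
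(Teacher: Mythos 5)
Your proof is correct and follows essentially the same approach as the paper's: the identical random construction via a matching $M$ on $\tilde X$, the identical bad events (you call them $C_e$ and $B_{y,j}$; the paper calls them $A_e$ and $B_{y,j}$), the identical dependency digraph bounds, and the asymmetric Lov\'asz Local Lemma applied in the same way. You correctly identify the $B_{y,j}$ inequality as the tight one, and you are right that the constant $8$ in $k$ is there precisely to make the exponential factor $(1-x_C)^{N_{BC}}$ large enough to beat the Chernoff bound $e^{-d/120}$. The only real difference from the paper is a cosmetic choice of weights: you take $x_C = \frac{1}{128\Delta}$, while the paper uses $\alpha = \frac{1}{250\Delta}$; both satisfy the required inequalities, with the paper's choice leaving a slightly larger margin.

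One small technical slip: from $x_C N_{BC} \le \frac{d}{128} + o(d)$ you conclude $(1-x_C)^{N_{BC}} \ge \exp(-\frac{d}{128} - o(d))$, but this direction of the inequality uses $(1-x)^n \ge e^{-xn}$, which is false (it is $\le$). The correct lower bound is $(1-x)^n \ge e^{-xn/(1-x)}$, which here gives $(1-x_C)^{N_{BC}} \ge \exp\bigl(-\frac{d}{127} - o(d)\bigr)$ since $x_C \le \frac{1}{128}$. The difference between $\frac{d}{127}$ and $\frac{d}{128}$ is $\Theta(d)$, not $o(d)$, so the intermediate claim is off by a constant in the exponent. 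It does not affect the conclusion, since $127 > 120$ still holds. However, it does undermine the precision of your parenthetical remark that "a constant of $7$ would not suffice for this part": with $k = \lceil\log_2\Delta\rceil + 7$ and a more carefully tuned $x_C$ (slightly above $\frac{1}{128\Delta}$, say $\frac{1}{124\Delta}$), one finds $\frac{x_C N_{BC}}{1-x_C} \le \frac{d+o(d)}{123}$, and since $123 > 120$ the local-lemma condition still holds; so $7$ plausibly does suffice with the right weights, though the margin is very thin. This side remark is not part of your argument for the stated lemma, so it does not affect correctness, but it is worth being careful when turning $(1-x)^n$ into an exponential: always use the form $(1-x)^n \ge e^{-xn/(1-x)}$ as the paper does.
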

\begin{proof}
    We choose the bipartitions $\{X'_1, X''_1\}, \ldots, \{X'_k, X''_k\}$ independently at random according to the distribution described in the proof of \Cref{lem:good bisection of one half}. Specifically, we introduce a dummy vertex~${\tilde v \notin [n]}$, and we define
    \[\tilde X := \left\{\begin{array}{l l}
    X &\text{if $|X|$ is even}, \\
    X \cup \{\tilde v\} & \text{if $|X|$ is odd}.
    \end{array}\right.\]
    We fix a perfect matching $M = \{u_1v_1, \ldots, u_{\lceil n/4 \rceil} v_{\lceil n/4 \rceil}\}$ in the underlying complete graph on $\tilde X$, and for each $i = 1, \ldots, \left\lceil \frac{n}{4} \right\rceil$ and each $j = 1, \ldots, k$, we independently select exactly one of $u_i$ or $v_i$ to be in $X'_j$ and the other to be in $X''_j$ (excepting $\tilde v$), with equal probability $\frac{1}{2}$.  
    We denote this random decision by $s_{i,j}$.
    
    For $e \in E(G_X)$, let $A_e$ be the event that \ref{(M1)} fails for $e$. For $y \in Y$ and $j \in [k]$, let $B_{y,j}$ be the event that $|N_H(y) \cap X_j'| \notin \frac{\deg_H(y)}{2} \pm \left(\frac{d}{20} - \frac{d^{2/3}}{2}\right)$. For every $j \in [k]$ and $e \in E(G_X)$, the probability that $e$ crosses $\{X'_j, X''_j\}$ is either $1$ or $\frac{1}{2}$, depending on whether $e \in M$ or not. Thus we have
    \[\mathbb P(A_e) \leq 2^{-k} \leq \frac{1}{256\Delta}\]
    for every $e \in E(G_X)$, and
    \[\mathbb P(B_{y,j}) \leq \exp\left(-\frac{d}{120}\right)\]
    for every $y \in Y$ and $j \in [k]$ (see the proof of \Cref{lem:good bisection of one half}).

    Note that each event $A_e$ is determined by those decisions $s_{i,j}$ for which $\{u_i, v_i\} \cap e \neq \emptyset$, and each $B_{y,j}$ is determined by those decisions $s_{i,j}$ for which $\{u_i, v_i\} \cap N_H(y) \neq \emptyset$. Therefore, since $G_X$ has maximum degree at most $\Delta$, and $H$ has maximum degree at most $\frac{d}{2} + d^{2/3}$, it follows from \Cref{rem:dependencydigraph} that there exists a dependency digraph $D$ for $\{A_e : e \in E(G_X)\} \cup \{B_{y,j} : y \in Y,\, j \in [k]\}$ with
    \begin{align*}
        \#\{e' : (A_e, A_{e'}) \in E(D)\} &\leq 4\Delta & \text{for each $e \in E(G_X)$}; \\
        \#\{(y,j) : (A_e, B_{y,j}) \in E(D)\} &\leq 4\left(\frac{d}{2} + d^{2/3}\right)k \leq 4dk & \text{for each $e \in E(G_X)$}; \\
        \#\{e : (B_{y,j}, A_e) \in E(D)\} &\leq 2\left(\frac{d}{2} + d^{2/3}\right)\Delta \leq 2\Delta d & \text{for each $y \in Y$, $j \in [k]$}; \\
        \#\{(y',j') : (B_{y,j}, B_{y',j'}) \in E(D)\} &\leq 2\left(\frac{d}{2} + d^{2/3}\right)^2 \leq d^2 & \text{for each $y \in Y$, $j \in [k]$}.
    \end{align*}
    
    Set $\alpha := \frac{1}{250\Delta}$ and $\beta := \frac{1}{d^3}$. For this choice of parameters, we have for sufficiently large $d$ that
    \begin{align*}
        \alpha(1-\alpha)^{4\Delta}(1-\beta)^{4dk} &\geq \alpha(1-4\alpha\Delta)(1-4\beta dk) \\
        &\geq \frac{1}{250\Delta} \cdot \frac{123}{125}\left(1 - \frac{4k}{d^2}\right) \\
        &\geq \mathbb P(A_e)
    \end{align*}
    for each $e \in E(G_X)$, and
    \begin{align*}
        \beta(1-\alpha)^{2\Delta d}(1-\beta)^{d^2} &\geq \beta(1-2\alpha \Delta)^d(1-\beta d^2) \\
        &= \frac{1}{d^3}\left(\frac{124}{125}\right)^d\left(1 - \frac{1}{d}\right) \\
        &\geq \frac{1}{2d^3}\exp\left(-\frac{d}{124}\right) \\
        &\geq \mathbb P(B_{y,j})
    \end{align*}
    for each $y \in Y$, $j \in [k]$. Here we used that $1-x = \left(1 + \frac{x}{1-x}\right)^{-1} \geq \exp\left(-\frac{x}{1-x}\right)$ for every $x < 1$. Therefore, by the asymmetric version of the Lov\'asz Local Lemma (\Cref{lem:local-lemma}), we have with positive probability that none of the events $A_e$ nor $B_{y,j}$ occur, so both \ref{(M1)} and \ref{(M2)} hold.
\end{proof}

\section{Regularization}\label{sec:regularization}

\subsection{Pairing subgraphs with the same number of edges}\label{sec:pairing}
\indent

In this subsection, we prove two simple lemmas which allow us to fine-tune the number of edges each bipartite subgraph has in our decomposition at different stages. Recall that we first take a bisection $\{X,Y\}$ of the vertex set of our $(n,d,\lambda)$-graph $G$. By iteratively applying \Cref{lem:good bisection of one half}, then finally applying \Cref{lem:cleanup} to leftover graphs $G_X$ and $G_Y$ of constant maximum degree, we obtain decompositions $\{H_{X,j}\}_j$ and $\{H_{Y,j}\}_j$ of $G[X]$ and $G[Y]$ into almost regular bipartite spanning subgraphs of $G$. Our next two lemmas will be used to ensure that $e(H_{X,j}) = e(H_{Y,j})$ for each $j$. The first one will be applied repeatedly, after each iterative application of \Cref{lem:good bisection of one half}, while the second one will be applied once at the very end to the decompositions of $G_X$ and $G_Y$ obtained via \Cref{lem:cleanup}. These applications can be viewed as ``correcting'' steps that will make sure that the decompositions we obtain satisfy the necessary properties (specifically \ref{(A2)}) to apply our main regularization lemma (\Cref{lem:regularization}) and obtain the final desired decomposition.

\begin{lemma}\label{lem:subgraph with m edges}
    Let $H$ be a nonempty graph, and let $m \leq e(H)$ be a positive integer. Then there exists an $m$-edge spanning subgraph $H' \subseteq H$ satisfying the following properties:
    \begin{enumerate}[label=(S\arabic*)]
        \item\label{(S1)} $\Delta(H') - \delta(H') \leq \Delta(H) - \delta(H) + 2$;
        \item\label{(S2)} $\Delta(H \setminus H') \leq (\Delta(H) + 1)\frac{e(H) - m}{e(H)} + 1.$
    \end{enumerate}
\end{lemma}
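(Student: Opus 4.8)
The plan is to obtain $H'$ by \emph{deleting} a carefully chosen set of $m' := e(H) - m$ edges from $H$; writing $F := H \setminus H'$ for this deleted subgraph, $F$ has $m'$ edges and we must arrange $\Delta(F) \le (\Delta(H)+1)\tfrac{m'}{e(H)} + 1$ (which is exactly \ref{(S2)}) together with \ref{(S1)}. The point is that both conclusions are really statements about $F$: for any vertices $u,w$,
\[
\deg_{H'}(u) - \deg_{H'}(w) = \big(\deg_H(u) - \deg_H(w)\big) + \big(\deg_F(w) - \deg_F(u)\big) \le \big(\Delta(H) - \delta(H)\big) + \big(\Delta(F) - \delta(F)\big),
\]
so it suffices to make $F$ weakly balanced, $\Delta(F) - \delta(F) \le 2$, while keeping $\Delta(F)$ not much larger than the ``expected'' value $(\Delta(H)+1)\tfrac{m'}{e(H)}$.

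To do this, first apply Vizing's theorem (\Cref{thm:Vizing}) to decompose $E(H)$ into matchings $M_1, \dots, M_k$ with $k \le \Delta(H)+1$, relabelled so that $|M_1| \ge \dots \ge |M_k|$. Let $s$ be the largest index with $|M_1| + \dots + |M_s| \le m'$; since $1 \le m \le e(H)$ we have $0 \le m' < e(H)$, so $0 \le s < k$, the class $M_{s+1}$ exists, and $r := m' - \sum_{i\le s}|M_i|$ satisfies $0 \le r < |M_{s+1}|$ by maximality of $s$. Choose any $R \subseteq M_{s+1}$ with $|R| = r$ and set $F := M_1 \cup \dots \cup M_s \cup R$, so that $|E(F)| = m'$ and $H' := H \setminus F$ has exactly $m$ edges.

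Now \ref{(S2)} is almost immediate: $F$ is a union of $s$ matchings plus part of one more, so $\Delta(F) \le s+1$; and since $M_1, \dots, M_s$ are the $s$ largest of the $k$ classes, their average size is at least $e(H)/k$, so $s \cdot \tfrac{e(H)}{k} \le |M_1| + \dots + |M_s| \le m'$, giving $s \le \tfrac{(\Delta(H)+1)m'}{e(H)}$. For \ref{(S1)}, by the displayed inequality it is enough to bound $\Delta(H')$ from above and $\delta(H')$ from below. For the former, each vertex $v$ lies in all but at most $k - \deg_H(v)$ of the $k$ colour classes, hence lies in at least $\deg_H(v) - (k-s)$ of $M_1,\dots,M_s$, so $\deg_F(v) \ge \deg_H(v) - (k-s)$ and thus $\deg_{H'}(v) \le \min\big(\deg_H(v),\, k-s\big) \le k - s \le \Delta(H) - s + 1$. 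For the latter, the crude bound $\deg_F(v) \le s+1$ gives $\delta(H') \ge \max(0,\, \delta(H) - s - 1)$. Combining, $\Delta(H') - \delta(H') \le (\Delta(H) - s + 1) - \max(0,\, \delta(H)-s-1)$, which is at most $\Delta(H) - \delta(H) + 2$ in both cases $\delta(H) \ge s+1$ and $\delta(H) \le s$.

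The routine leftovers are the two-line case split at the very end and the degenerate case $m' = 0$ (where $F$ is empty and $H' = H$). The one genuinely non-obvious step — the crux — is the observation behind the bound on $\Delta(H')$: deleting up to $s$ whole colour classes keeps $H'$ balanced relative to $H$ with only a $+2$ loss precisely because a proper edge colouring with at most $\Delta(H)+1$ colours forces every vertex into all but at most $1 + (\Delta(H) - \deg_H(v))$ of the classes, so no vertex can avoid too many of the deleted ones. Choosing the largest classes first is what makes $s$ small enough for \ref{(S2)}.
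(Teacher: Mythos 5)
Your proof is correct and follows essentially the same strategy as the paper's: Vizing's theorem to decompose $E(H)$ into at most $\Delta(H)+1$ matchings, sort them by size, and delete a prefix (plus a partial matching) totalling $e(H)-m$ edges; then \ref{(S1)} follows because both $H'$ and $H\setminus H'$ are unions of few colour classes, and \ref{(S2)} follows by a pigeonhole-type argument on the sizes of the deleted classes. The only cosmetic differences are that you index by $s = t-1$ where the paper uses $t$, you derive the bound $s \le (\Delta(H)+1)\tfrac{e(H)-m}{e(H)}$ via the slightly cleaner ``average of the top $s$ classes exceeds the overall average'' observation, and you make the final case split on $\delta(H)$ versus $s$ explicit where the paper leaves it implicit (its bound $\delta(H') \ge \delta(H)-t$ remains valid even when the right-hand side is negative, so the case split is unnecessary but harmless).
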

\begin{proof}
    If $e(H) = m$ already, then we simply take $H' := H$. Otherwise, by Vizing's theorem (\Cref{thm:Vizing}), there exists a partition $E(H) = \bigcup_{i=1}^{\Delta+1} M_i$ of $E(H)$ into $\Delta + 1$ matchings~$M_1, \ldots, M_{\Delta+1}$, where $\Delta := \Delta(H)$. We order the matchings so that $|M_1| \geq \cdots \geq |M_{\Delta+1}|$. Let~$t$ be the minimum positive integer such that~${\sum_{i=1}^t |M_i| \geq e(H) - m}$. Let $M_t'$ be a submatching of~$M_t$ such that~${\sum_{i=1}^{t-1}|M_i| + |M'_t| = e(H) - m}$. Let $H'$ be the spanning subgraph of $H$ with edge set~${E(H') := (M_t \setminus M_t') \cup \bigcup_{i=t+1}^{\Delta+1}M_i}$. Then $e(H') = m$, as desired.

    Note that for every $v \in V(H)$, we have
    \[\deg_{H'}(v) \geq \deg_H(v) - t \geq \delta(H) - t\]
    since $E(H \setminus H')$ is a union of $t$ matchings. On the other hand, we have
    \[\deg_{H'}(v) \leq \Delta(H)+2-t\]
    since $E(H')$ is itself a union of $\Delta+2-t$ matchings. This verifies \ref{(S1)}.

    To prove \ref{(S2)}, note that if $t > 1$, then $\sum_{i=1}^{t-1} |M_i| < e(H) - m$ implies that $|M_i| \leq \frac{e(H)-m}{t-1}$ for every $i \geq t-1$. Thus
    \[e(H) = \sum_{i=1}^{t-1}|M_i| + \sum_{i=t}^{\Delta+1}|M_i| \leq (\Delta+1)\frac{e(H)-m}{t-1}.\]
    Rearranging gives
    \begin{equation}\label{eq:number of matchings}
        \Delta(H \setminus H') \leq t \leq (\Delta + 1)\frac{e(H) - m}{e(H)}+1,
    \end{equation}
    as required. Note that \eqref{eq:number of matchings} holds if $t=1$ as well. This completes the proof.
\end{proof}

\begin{proposition}\label{lem:same size refinements}
    Let $A$ and $B$ be nonempty finite sets of the same cardinality, and let~${\ell \geq 2}$. Let~$\mathcal P$ and $\mathcal Q$ be partitions of $A$ and $B$, respectively, with $|\mathcal P| + |\mathcal Q| = \ell$. Then there exist refinements~${\mathcal P' = \{S_1, \ldots, S_t\}}$ of~$\mathcal P$ and~${\mathcal Q' = \{T_1, \ldots, T_t\}}$ of~$\mathcal Q$ such that $t \leq \ell - 1$, and  $|S_i| = |T_i|$ for every $i = 1, \ldots, t$.
\end{proposition}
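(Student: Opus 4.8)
The plan is to reduce the statement to a one-dimensional bookkeeping fact about interval partitions. Write $N := |A| = |B|$, let $a := |\mathcal P|$ and $b := |\mathcal Q|$, so that $a + b = \ell$ and $a, b \geq 1$, and enumerate the blocks of $\mathcal P$ as $P_1, \ldots, P_a$ and those of $\mathcal Q$ as $Q_1, \ldots, Q_b$. First I would lay both partitions out along the segment $\{1, \ldots, N\}$: using the prefix sums $0 = \pi_0 < \pi_1 < \cdots < \pi_a = N$ with $\pi_i - \pi_{i-1} = |P_i|$, the intervals $[\pi_{i-1}+1, \pi_i]$ form a partition of $\{1,\dots,N\}$ mirroring $\mathcal P$; define intervals mirroring $\mathcal Q$ analogously via prefix sums $0 = \rho_0 < \cdots < \rho_b = N$.

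Next I would pass to the common refinement of these two interval partitions. Its parts are the maximal sub-intervals of $\{1,\dots,N\}$ not straddling any of the internal cut points in $\{\pi_1, \ldots, \pi_{a-1}\} \cup \{\rho_1, \ldots, \rho_{b-1}\}$; since this set has at most $(a-1) + (b-1) = \ell - 2$ (distinct) elements, it splits $\{1,\dots,N\}$ into at most $\ell - 1$ nonempty intervals $I_1, \ldots, I_t$ with $t \leq \ell - 1$. By construction each $I_r$ lies in a single interval of the $\mathcal P$-layout, hence ``belongs'' to a unique block $P_{\sigma(r)}$, and likewise to a unique block $Q_{\tau(r)}$, with $\sum_{r : \sigma(r) = i} |I_r| = |P_i|$ for each $i$ and the symmetric identity for $\mathcal Q$.

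Finally I would transport this back to $A$ and $B$. Fixing a block $P_i$ of $\mathcal P$, the intervals $\{I_r : \sigma(r) = i\}$ have lengths summing to $|P_i|$, so I can choose an arbitrary partition of the set $P_i$ into pieces $S_r$ (indexed by $r$ with $\sigma(r) = i$) with $|S_r| = |I_r|$. Doing this over all $i$ yields a refinement $\mathcal P' = \{S_1, \ldots, S_t\}$ of $\mathcal P$ with $|S_r| = |I_r|$ for every $r$; performing the symmetric construction on the blocks of $\mathcal Q$ gives a refinement $\mathcal Q' = \{T_1, \ldots, T_t\}$ of $\mathcal Q$ with $|T_r| = |I_r|$. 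Then $|S_r| = |I_r| = |T_r|$ for all $r$ and $t \leq \ell - 1$, as required.

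There is no serious obstacle here; the only point needing care is the counting step — one must count the internal cut points without multiplicity (so that all the $I_r$ are genuinely nonempty) and check the bound $t \le (a-1)+(b-1)+1$ still reads correctly in the degenerate cases $a = 1$ or $b = 1$. As an alternative I could argue by induction on $\ell$: if $|\mathcal P| = 1$, take $\mathcal P'$ to be any partition of $A$ with the same block sizes as $\mathcal Q$ and $\mathcal Q' = \mathcal Q$ (and symmetrically if $|\mathcal Q| = 1$); otherwise pick blocks $P \in \mathcal P$ and $Q \in \mathcal Q$ with, say, $|P| \le |Q|$, split off $Q_1 \subseteq Q$ with $|Q_1| = |P|$, output the paired sets $(P, Q_1)$, and recurse on $A \setminus P$ and $B \setminus Q_1$ equipped with the partitions $\mathcal P \setminus \{P\}$ and $(\mathcal Q \setminus \{Q\}) \cup \{Q \setminus Q_1\}$ (discarding the empty block if $Q_1 = Q$), whose block counts sum to at most $\ell - 1$, so the recursion produces at most $\ell - 2$ further paired sets.
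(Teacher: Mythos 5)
Your primary argument is correct and takes a genuinely different route from the paper. The paper proves the proposition by induction on $\ell$: in the inductive step it takes a minimum-cardinality block $S$ of $\mathcal P\cup\mathcal Q$ (say $S\in\mathcal P$), carves a subset $T'$ of the same size out of some $T\in\mathcal Q$, pairs $(S,T')$, and recurses on $\mathcal P\setminus\{S\}$ and $(\mathcal Q\setminus\{T\})\cup\{T\setminus T'\}$, whose total block count is at most $\ell-1$. Your ``layout on $\{1,\dots,N\}$'' construction instead builds the answer in one shot: the common refinement of the two interval partitions has at most $(a-1)+(b-1)+1=\ell-1$ nonempty intervals because there are at most $\ell-2$ distinct internal cut points, and each interval lies inside exactly one $\mathcal P$-block and one $\mathcal Q$-block, so transporting those lengths back to $A$ and $B$ gives the paired refinements directly. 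This sweep-line/prefix-sum version is more explicit about what the refinement is and makes the count $t\le\ell-1$ geometrically transparent; the paper's induction is terser but leans (implicitly) on the observation that the minimum-cardinality block cannot exhaust the whole ground set once $\ell>2$, so the recursive sets stay nonempty. Your concluding ``alternative'' paragraph is essentially the paper's induction, phrased with the base case $\min(a,b)=1$ rather than $\ell=2$; both are fine. One point you should make explicit in the main argument if you were to write it up: the ``arbitrary partition of $P_i$ into pieces of lengths $|I_r|$'' step requires that those lengths sum to $|P_i|$, which you do note, and this is what guarantees $\mathcal P'$ is a genuine refinement of $\mathcal P$.
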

\begin{proof}
    We prove by induction on $\ell$. If $\ell = 2$, then we can take $\mathcal P' = \mathcal P = \{A\}$ and $\mathcal Q' = \mathcal Q = \{B\}$.
    
    Now assume that $\ell > 2$. Let $S$ be a set in $\mathcal P \cup \mathcal Q$ with minimum cardinality, say $S \in \mathcal P$ without loss of generality. Choose any set $T \in \mathcal Q$, and find $T' \subseteq T$ with $|T'| = |S|$. Apply the inductive hypothesis to
    \[\mathcal P_1 := \mathcal P \setminus \{S\} \qquad \text{and} \qquad \mathcal Q_1 := \left\{\begin{array}{l l}
    (\mathcal Q \setminus \{T\}) \cup \{T \setminus T'\} & \text{if } T' \neq T \\
    \mathcal Q \setminus \{T\} & \text{if } T' = T
    \end{array}\right.\]
    to obtain refinements $\mathcal P_1'$ of $\mathcal P_1$ and $\mathcal Q_1'$ of $\mathcal Q_1$ with parts of equal size and $|\mathcal P_1'| = |\mathcal Q_1'| \leq \ell-2$. Now $\mathcal P_1' \cup \{S\}$ and $\mathcal Q_1' \cup \{T'\}$ are the desired refinements of $\mathcal P$ and $\mathcal Q$. The proof is complete. 
\end{proof}

\subsection{Obtaining a regular decomposition from almost regular decompositions}

\indent

After taking a typical balanced bipartition $\{X,Y\}$ of a $d$-regular graph $G$, the tools developed up to this point allow us to obtain decompositions $\{H_{X,j}\}_{j=1}^K$ of $G[X]$ and $\{H_{Y,j}\}_{j=1}^K$ of $G[Y]$ into~${K = \log_2 d + O(1)}$ almost regular bipartite spanning subgraphs, with $e(H_{X,j}) = e(H_{Y,j})$ for each~$j \in [K]$. We can also ensure that each of these subgraphs admits a bipartition which is $\frac{d}{5}$-good with respect to $G[X,Y]$.

In the remainder of this section, we develop some additional tools to use edges from $G[X,Y]$ to regularize these decompositions of $G[X]$ and $G[Y]$ and produce our desired decomposition of $G$. The main two lemmas of this section (\Cref{lem:good induced bipartite subgraphs of expanders} and \Cref{lem:regularization}) will be used to show that if $G$ is an $(n,d,\lambda)$-graph with $\lambda \leq \frac{d}{12}$, then the induced bipartite subgraph $G[X,Y]$ contains edge-disjoint subgraphs~$R_1, \ldots, R_K$ such that each~$H_{X,j} \cup H_{Y,j} \cup R_j$ is a regular bipartite spanning subgraph of $G$. Thus~$\left\{H_{X,j} \cup H_{Y,j} \cup R_j\right\}_{j=1}^K \cup \left\{G[X,Y] \setminus \bigcup_{j=1}^K R_j\right\}$ will be our desired decomposition, completing the proof of \Cref{main-thm}.

For this, we make use of the following definitions.

\begin{definition}
    Let $H = (X \cup Y, E)$ be a balanced bipartite graph, and let $d,\rho, \alpha, \gamma \geq 0$. We say that $H$ is \emph{$(d, \rho, \alpha, \gamma)$-robustly matchable} if for every subgraph $F \subseteq H$ of maximum degree at most~$\rho d$, and for every positive $\alpha' \leq \alpha$, the bipartite graph $H \setminus F$ has an $f$-factor for every integer-valued function $f : X \cup Y \to [(1-\gamma)\alpha' d, \alpha' d]$ satisfying $f(X) = f(Y)$.
\end{definition}
Note that in particular, any balanced bipartite graph $H$ which is $(d,\rho,\alpha,\gamma)$-robustly matchable with $\alpha d \geq 1$ contains a perfect matching, but clearly this property is much stronger than that. We now add yet another layer of flexibility.

\begin{definition}
    Let $H = (X \cup Y, E)$ be a balanced bipartite graph, and let $d,\rho, \alpha, \gamma \geq 0$. We say that $H$ is \emph{$(d, \rho, \alpha,\gamma)$-good} if the following property holds: for any $X' \subseteq X$ and $Y' \subseteq Y$ satisfying~$|X'| = |Y'|$ and $\delta(H[X',Y']) \geq \frac{d}{5}$, we have that $H[X',Y']$ is $(d, \rho, \alpha, \gamma)$-robustly matchable.
\end{definition}

\begin{lemma}\label{lem:good induced bipartite subgraphs of expanders}
	Let $\rho, \alpha,\gamma > 0$ with $\rho + \alpha \leq \frac{1}{100}$ and $\gamma \leq \frac{1}{30}$.
	Let $G$ be an $(n,d,\lambda)$-graph with $n$ even and $\lambda \leq \frac{d}{12}$. Then for any balanced bipartition $\{X,Y\}$ of $[n]$, the induced bipartite subgraph $G[X,Y]$ is $(d,\rho,\alpha,\gamma)$-good.
\end{lemma}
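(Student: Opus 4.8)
\textbf{Proof plan for Lemma~\ref{lem:good induced bipartite subgraphs of expanders}.}

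The plan is to verify the definition of $(d,\rho,\alpha,\gamma)$-good directly by checking Ore's condition \eqref{eq:Ore}. Fix a balanced bipartition $\{X,Y\}$ of $[n]$, and fix $X' \subseteq X$, $Y' \subseteq Y$ with $|X'| = |Y'|$ and $\delta(G[X',Y']) \geq \frac{d}{5}$; write $H_0 := G[X',Y']$. To show $H_0$ is $(d,\rho,\alpha,\gamma)$-robustly matchable, fix a subgraph $F \subseteq H_0$ with $\Delta(F) \leq \rho d$, a value $\alpha' \leq \alpha$, and an integer-valued function $f : X' \cup Y' \to [(1-\gamma)\alpha' d, \alpha' d]$ with $f(X') = f(Y')$; set $H := H_0 \setminus F$. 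By Ore's theorem (\Cref{thm:Ore}), it suffices to check that $e_H(S,T) \geq f(S) + f(T) - f(X')$ for every $S \subseteq X'$ and $T \subseteq Y'$.

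The main computation is to lower bound $e_H(S,T)$. First, $e_H(S,T) \geq e_{H_0}(S,T) - e_F(S,T) \geq e_{H_0}(S,T) - \rho d \min\{|S|,|T|\}$ since $\Delta(F)\le \rho d$. The plan is to split into two regimes according to the sizes of $S$ and $T$. When both $|S|$ and $|T|$ are reasonably large — say at least some small constant fraction of $n$ — I would apply the expander mixing lemma (\Cref{lem:expander mixing lemma}) to $G$ with the vertex sets $S, T$: this gives $e_G(S,T) \geq \frac{d}{n}|S||T| - \lambda\sqrt{|S||T|}$. Since $S \subseteq X$ and $T \subseteq Y$ are disjoint, every edge of $G$ counted by $e_G(S,T)$ lies in $G[X,Y]$, and in fact in $H_0$ provided $S\subseteq X'$, $T\subseteq Y'$ — so $e_{H_0}(S,T) = e_G(S,T)$ up to the orientation bookkeeping, i.e. $e_{H_0}(S,T) \geq \frac{d}{n}|S||T| - \lambda\sqrt{|S||T|}$. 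Using $\lambda \leq \frac{d}{12}$ and $|S|,|T| \geq cn$ for the relevant constant $c$, the quadratic main term $\frac{d}{n}|S||T|$ dominates, and one checks that $e_H(S,T) \geq \frac{d}{n}|S||T| - \lambda\sqrt{|S||T|} - \rho d\min\{|S|,|T|\}$ comfortably exceeds $\alpha' d(|S| + |T|) \geq f(S)+f(T)$, which is more than enough since $f(X') \geq 0$. When $|S|$ (or $|T|$) is small, the mixing lemma is too weak, and instead I would use the minimum-degree hypothesis: if, say, $|S| \leq |T|$ and $|S|$ is small, then $e_H(S,T) \geq e_{H_0}(S,T) - \rho d|S| \geq \frac{d}{5}|S| - \rho d|S| = \left(\frac15 - \rho\right)d|S|$ counting edges at the vertices of $S$ that land in $T$ — wait, this needs $T$ to capture most of those edges, so more carefully: $e_{H_0}(S,T) \geq \sum_{v\in S}\deg_{H_0}(v) - |S|(|Y'| - |T|) \ge \frac{d}{5}|S| - |S||Y' \setminus T|$, which is only useful when $|Y'\setminus T|$ is small. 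So the genuinely delicate sub-case is when one side is small; there the clean estimate is $f(S) + f(T) - f(X') = f(T) - f(X' \setminus S) \leq f(T) - (1-\gamma)\alpha' d|X' \setminus S|$, and one bounds $e_H(S,T) \ge e_H(X',T) - e_H(X'\setminus S, T)$, using the mixing/degree bounds on the pair $(X',T)$ (both large when $|T|$ is large) and a trivial bound $e_H(X'\setminus S, T) \le \Delta(H_0)|X'\setminus S| \leq (\frac d2 + d^{2/3})|X'\setminus S|$ — but since $f(X'\setminus S)$ only carries a coefficient $(1-\gamma)\alpha' d \ll \frac d2$, we instead want to upper bound $f(S)+f(T)-f(X')$ in terms of $\min\{|S|,|T|\}$ and then use $e_H(S,T)$ from the large side. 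This symmetric casework — small $|S|$, small $|T|$, small $|X'\setminus S|$, small $|Y'\setminus T|$, and the bulk case — is the bookkeeping core of the proof.

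The main obstacle will be organizing this casework so the constants close: the constraints $\rho + \alpha \leq \frac{1}{100}$ and $\gamma \leq \frac{1}{30}$ and $\lambda \le \frac{d}{12}$ and $\delta \ge \frac d5$ are all chosen to make the worst case go through with a definite margin, and the cleanest route is probably to observe that in Ore's inequality the right-hand side $f(S) + f(T) - f(X')$ is always at most $\alpha' d \cdot \min\{|S| + |T| - (1-\gamma)|X'|,\ |S|,\ |T|\}$ (using $f(S) \le \alpha' d|S|$ and $f(X') \ge (1-\gamma)\alpha' d|X'|$ together with $f(S) + f(T) - f(X') = f(T) - f(X'\setminus S) = f(S) - f(Y'\setminus T)$), reducing everything to: for all $S,T$, $e_H(S,T) \ge \alpha' d \min\{|S|,\,|T|,\,|S|+|T|-(1-\gamma)|X'|\}$. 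The third term is negative unless $|S| + |T| > (1-\gamma)|X'|$, i.e. unless both $X'\setminus S$ and $Y'\setminus T$ are small, which is exactly the regime where the minimum-degree bound $\delta(H_0) \ge \frac d5$ gives $e_{H_0}(S,T) \ge \frac d5|S| - |X'\setminus S| \cdot \Delta(H_0)$ type estimates — except $\Delta(H_0)$ could be as large as $\approx d/2$, so one really must use the mixing lemma on the complementary small sets: $e_{H_0}(S,T) \ge e_{H_0}(X',Y') - e_{H_0}(X'\setminus S, Y') - e_{H_0}(X', Y'\setminus T)$, and bound the two subtracted terms by $\frac{d}{n}|X'\setminus S||Y'| + \lambda\sqrt{\cdots}$ and similarly, against the main term $e_{H_0}(X',Y') \ge \frac{d}{n}|X'|^2 - \lambda|X'|$. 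I expect this to be the step requiring the most care, but it is a finite amount of elementary estimation once the three-way minimum reformulation is in hand.
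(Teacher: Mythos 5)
You have the right skeleton (invoke Ore's theorem, rewrite $f(S)+f(T)-f(X')$ as $f(T)-f(X'\setminus S)=f(S)-f(Y'\setminus T)$ to get the three-way upper bound $\alpha' d\min\{|S|,|T|,|S|+|T|-(1-\gamma)|X'|\}$, treat the ``bulk'' case with the mixing lemma, and treat the rest with the minimum-degree hypothesis), and your handling of the case $|S|,|T|\geq cn$ is fine. But the route you sketch for the remaining case would not close, and you flagged this yourself. The inclusion--exclusion $e_{H_0}(S,T)\geq e_{H_0}(X',Y')-e_{H_0}(X'\setminus S,Y')-e_{H_0}(X',Y'\setminus T)$ with mixing bounds has two fatal problems. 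First, the mixing lower bound $e_{H_0}(X',Y')\geq\frac{d}{n}|X'|^2-\lambda|X'|$ is negative, hence vacuous, whenever $|X'|<\frac{\lambda n}{d}=\frac{n}{12}$; since the hypothesis $\delta(G[X',Y'])\geq\frac d5$ only forces $|X'|\geq\frac d5$ and $d$ may be much smaller than $n$, this regime is entirely possible — the lower bound you actually need here is the degree bound $e_{H_0}(X',Y')\geq\frac d5|X'|$, not mixing. Second, and more importantly, in the regime where Ore's right-hand side is positive (WLOG $|S|\geq|T|$, so one only gets $(1-\gamma)|X'\setminus S|<|T|$), the other complement $|Y'\setminus T|$ can still be nearly all of $|Y'|$ — take $|T|$ small and $S$ nearly all of $X'$ — so the term $e_{H_0}(X',Y'\setminus T)$ you would subtract is close to $e_{H_0}(X',Y')$ itself, and the inequality you are left with is useless. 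Your earlier characterization ``unless both $X'\setminus S$ and $Y'\setminus T$ are small'' misstates the regime where work is needed.

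You actually wrote down the correct identity — $e_H(S,T)=e_H(X',T)-e_H(X'\setminus S,T)$ — before abandoning it because the only upper bound you saw for $e_H(X'\setminus S,T)$ was the crude $\Delta(H_0)|X'\setminus S|$ (and note: for a general balanced bipartition $\Delta(G[X',Y'])$ is only bounded by $d$, not $\frac d2+d^{2/3}$, since this lemma does not assume $\{X,Y\}$ came from \Cref{lem:almost regular bipartition}). The missing observation, which is the heart of the paper's proof, is that one applies the expander mixing lemma to the pair $(X'\setminus S,T)$ itself. After WLOG $|S|\geq|T|$ and after handling $|T|\geq\frac n{10}$ via mixing on $(S,T)$, one has $|T|<\frac n{10}$ and, from positivity of Ore's RHS, $|X'\setminus S|<\frac{|T|}{1-\gamma}$, so \emph{both} sets $X'\setminus S$ and $T$ are at most about $\frac n{10}$; mixing then gives $e_G(X'\setminus S,T)\leq\frac{1}{1-\gamma}\left(\frac{|T|}{n}+\frac{1}{12}\right)d|T|<\frac{19d}{100}|T|$, which squeaks in under the degree lower bound $e_{G\setminus F}(X',T)\geq\left(\frac15-\rho\right)d|T|$ with exactly the margin $\alpha d|T|$ needed (this is where $\rho+\alpha\leq\frac1{100}$ is tight). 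Concretely: you were one observation away, namely that the set you subtract against $T$ should be $X'\setminus S$, not $Y'$, and that in the only non-trivial regime this set is automatically small enough for mixing to beat the trivial degree bound.
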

\begin{proof}
	Let $F \subseteq G[X,Y]$ be a subgraph with maximum degree at most $\rho d$. Let $X' \subseteq X$ and $Y' \subseteq Y$ satisfying $|X'| = |Y'|$ and $\delta(G[X',Y']) \geq \frac{d}{5}$. Let $\alpha' \leq \alpha$, and let $f : X' \cup Y' \to [(1-\gamma)\alpha' d, \alpha' d]$ be an integer-valued function satisfying $f(X') = f(Y')$.
	
	Suppose that $G[X',Y'] \setminus F$ does not have an $f$-factor. Then by Ore's theorem (\Cref{thm:Ore}), there exist subsets $S \subseteq X'$ and $T \subseteq Y'$ such that
	\begin{equation}\label{eq:Ore contrapositive}
		e_{G \setminus F}(S,T) < f(S)+f(T)-f(X') = f(T) - f(X' \setminus S) \leq \alpha' d(|T| - (1-\gamma)|X' \setminus S|) \leq \alpha' d|T|.
	\end{equation}
	We assume without loss of generality that $|S| \geq |T|$.
	
	Suppose for a moment that $|T| \geq \frac{n}{10}$. Then $|S| \geq \frac{n}{10}$ as well, so by the expander mixing lemma (\Cref{lem:expander mixing lemma}), we have
	\[e_G(S,T) \geq \frac{d}{n}|S||T| - \lambda \sqrt{|S||T|} \geq \left(\frac{1}{n} - \frac{1}{12\sqrt{|S||T|}}\right)d|S||T| \geq \left(\frac{1}{n} - \frac{5}{6n}\right)\frac{dn}{10}|T| = \frac{d}{60}|T|.\]
	Therefore, we have
	\[e_{G \setminus F}(S,T) = e_G(S,T) - e_F(S,T) \geq \frac{d}{60}|T| - \rho d|T| \geq \alpha d |T|\]
	since $\Delta(F) \leq \rho d$ and $\rho + \alpha \leq \frac{1}{100}$, and this contradicts \eqref{eq:Ore contrapositive}. We conclude that $|T| \leq \frac{n}{10}$.
	
	Recall that $(1-\gamma)|X' \setminus S| < |T|$ by \eqref{eq:Ore contrapositive}. By the expander mixing lemma (\Cref{lem:expander mixing lemma}), we have
	\begin{align*}
	    e_{G \setminus F}(X' \setminus S, T) &\leq e_G(X' \setminus S, T) \\
     &\leq \frac{d}{n}|X' \setminus S||T| + \lambda \sqrt{|X' \setminus S||T|} \\
     &\leq \frac{1}{1-\gamma}\left(\frac{|T|}{n} + \frac{1}{12}\right)d|T| \\
     &< \frac{19d}{100}|T|
	\end{align*}
    since $\gamma \leq \frac{1}{30}$.
	On the other hand, we have
	\begin{align*}
		e_{G\setminus F}(X', T) &\geq \sum_{v \in T} \left(\deg_{G[X',Y']}(v) - \deg_F(v)\right) \\
		&\geq \frac{d}{5}|T| - \rho d|T| \\
		&\geq \frac{19d}{100}|T| + \alpha d |T| \\
		&> e_{G \setminus F}(X' \setminus S, T) + e_{G \setminus F}(S, T)\\
        &=e_{G\setminus F}(X',T)
	\end{align*}
	since $\rho + \alpha \leq \frac{1}{100}$. This is clearly impossible, and the proof is complete.
\end{proof}

\begin{lemma}\label{lem:regularization}
    Let $n$ be an even integer, and let $d, \rho, \alpha, \gamma > 0$. Let $G$ be a regular graph on $[n]$. Let $\{X,Y\}$ be a balanced bipartition of $[n]$, and suppose that $G[X,Y]$ is $(d,\rho,\alpha,\gamma)$-good. Further suppose that for some integer $K$ with $\frac{\rho}{\alpha}\leq K \leq \frac{\gamma \rho d}{2}$, there exist bipartite decompositions $\{H_{X,j}\}_{j \in [K]}$ and $\{H_{Y,j}\}_{j \in [K]}$ of~$G[X]$ and~$G[Y]$, respectively, such that the following conditions hold for each $j \in [K]$:
	\begin{enumerate}[label=(A\arabic*)]
		\item\label{(A1)} for each $U \in \{X,Y\}$, we have $\Delta(H_{U,j}) - \delta(H_{U,j}) \leq \frac{\gamma \rho d}{4K};$
        \item\label{(A2)} $e(H_{X,j}) = e(H_{Y,j})$;
        \item\label{(A3)} $H_{X,j}$ and $H_{Y,j}$ have respective bipartitions $\{X_j', X_j''\}$ and $\{Y_j', Y_j''\}$ which are $\frac{d}{5}$-good with respect to $G[X,Y]$.
	\end{enumerate}
    Then there exists a decomposition of $G$ into $K+1$ regular bipartite spanning subgraphs.
\end{lemma}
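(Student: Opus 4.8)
The plan is to realize the regularization strategy from \Cref{section:outline}. For each $j\in[K]$ I will choose a target regularity $s_j$, and then an edge-disjoint family of subgraphs $R_1,\dots,R_K$ of $G[X,Y]$, each of the form $R_j=R_j'\cup R_j''$ with $R_j'\subseteq G[X_j',Y_j']$ and $R_j''\subseteq G[X_j'',Y_j'']$, chosen so that every $H_{X,j}\cup H_{Y,j}\cup R_j$ is $s_j$-regular. Such a graph is automatically a bipartite spanning subgraph of $G$ with bipartition $\{X_j'\cup Y_j'',\,X_j''\cup Y_j'\}$, because $H_{X,j}$ has parts $\{X_j',X_j''\}$, $H_{Y,j}$ has parts $\{Y_j',Y_j''\}$, and $R_j$ only adds $X_j'$--$Y_j'$ and $X_j''$--$Y_j''$ edges. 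Granting this, $\{H_{X,j}\cup H_{Y,j}\cup R_j\}_{j\in[K]}\cup\{G[X,Y]\setminus\bigcup_jR_j\}$ is the desired decomposition into $K+1$ regular bipartite spanning subgraphs: it is a decomposition since $\{H_{X,j}\}_j,\{H_{Y,j}\}_j$ decompose $G[X],G[Y]$ and the $R_j$ are edge-disjoint subgraphs of $G[X,Y]$; the first $K$ pieces are bipartite spanning as noted; and the last piece is bipartite with parts $X,Y$ and is regular, since every vertex $v$ has degree $\deg_G(v)-\sum_{j\in[K]}s_j$ in it, a constant because $G$ is regular.

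Before choosing the $R_j$, I would normalize the bipartitions. For each $j$, after possibly swapping the names $Y_j'\leftrightarrow Y_j''$ (which preserves \ref{(A3)}, as being $\tfrac d5$-good is symmetric in the two parts) I may assume $|X_j'|=|Y_j'|$ and hence $|X_j''|=|Y_j''|$: indeed $|X|=|Y|=\tfrac n2$, so \ref{(G1)} forces all four parts to have size $\lfloor\tfrac n4\rfloor$ or $\lceil\tfrac n4\rceil$. Write $\Delta_j:=\max\{\Delta(H_{X,j}),\Delta(H_{Y,j})\}$ and $g_j:=\Delta_j-\min\{\delta(H_{X,j}),\delta(H_{Y,j})\}$; since $H_{X,j},H_{Y,j}$ are subgraphs of graphs on $\tfrac n2$ vertices with $e(H_{X,j})=e(H_{Y,j})$ by \ref{(A2)}, they have equal average degree, so \ref{(A1)} yields $g_j\le\tfrac{\gamma\rho d}{2K}$, and all the numbers $\deg_{H_{X,j}}(v)$ $(v\in X)$ and $\deg_{H_{Y,j}}(v)$ $(v\in Y)$ lie in $[\Delta_j-g_j,\,\Delta_j]$. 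Set $L:=\lfloor\tfrac{\rho d}{2K}\rfloor$; from $\tfrac\rho\alpha\le K\le\tfrac{\gamma\rho d}{2}$ one checks $1\le(1-\gamma)\tfrac{\rho d}{2K}\le L\le\tfrac{\rho d}{2K}\le\tfrac{\alpha d}{2}$. For each $j$ put $s_j:=\Delta_j+L$ and define $f_j:[n]\to\mathbb N$ by $f_j(v):=s_j-\deg_{H_{X,j}}(v)$ for $v\in X$ and $f_j(v):=s_j-\deg_{H_{Y,j}}(v)$ for $v\in Y$. Then $f_j(v)\in[L,\,L+g_j]$ for all $v$, and bipartiteness of $H_{X,j},H_{Y,j}$ with the above parts, \ref{(A2)}, and $|X_j'|=|Y_j'|$ give $f_j(X_j')=f_j(Y_j')$ and $f_j(X_j'')=f_j(Y_j'')$.

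I would then construct $R_1,\dots,R_K$ in order. Suppose edge-disjoint $R_1,\dots,R_{j-1}$ have been found with $\deg_{R_i}(v)=f_i(v)$ for all $v$, and let $F_j'$ be the subgraph of $\bigcup_{i<j}R_i$ induced on $X_j'\cup Y_j'$. Since every edge of every $R_i$ joins $X$ to $Y$, we have $F_j'\subseteq G[X_j',Y_j']$, with maximum degree at most $\sum_{i<j}(L+g_i)\le\tfrac{(1+\gamma)\rho d}{2}\le\rho d$. Using that $G[X,Y]$ is $(d,\rho,\alpha,\gamma)$-good, that $|X_j'|=|Y_j'|$, and that \ref{(A3)} forces $\delta(G[X_j',Y_j'])\ge\tfrac d5$ (each $x\in X_j'$ has $\ge\tfrac d5$ neighbors in $Y_j'$ since $\{Y_j',Y_j''\}$ is $\tfrac d5$-good, and symmetrically for $y\in Y_j'$), the graph $G[X_j',Y_j']$ is $(d,\rho,\alpha,\gamma)$-robustly matchable. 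I apply this with $F=F_j'$ and $\alpha'=\alpha_j':=\tfrac{L}{(1-\gamma)d}$: then $\alpha_j'\le\alpha$ since $L\le\tfrac{\alpha d}{2}\le(1-\gamma)\alpha d$, the restriction of $f_j$ to $X_j'\cup Y_j'$ is integer-valued with values in $[L,L+g_j]\subseteq[(1-\gamma)\alpha_j'd,\,\alpha_j'd]$ (because $g_j\le\tfrac{\gamma\rho d}{2K}\le\tfrac{\gamma L}{1-\gamma}$, using $L\ge(1-\gamma)\tfrac{\rho d}{2K}$), and it satisfies $f_j(X_j')=f_j(Y_j')$. Hence $G[X_j',Y_j']\setminus F_j'$ has an $f_j$-factor $R_j'$; the analogous step on $G[X_j'',Y_j'']$ produces $R_j''$. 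Then $R_j:=R_j'\cup R_j''$ is edge-disjoint from $R_1,\dots,R_{j-1}$ (its two halves being edge-disjoint from each other too) and has $\deg_{R_j}(v)=f_j(v)$ for all $v$, so $H_{X,j}\cup H_{Y,j}\cup R_j$ is $s_j$-regular, closing the induction.

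I expect the main obstacle to be the choice of the target regularities $s_j$ in the second step: the prescribed-degree functions $f_j$ must simultaneously be (i) realizable as $f$-factors — which is what the size normalization and $f_j(X_j')=f_j(Y_j')$ are for; (ii) confined to a window $[(1-\gamma)\alpha'd,\,\alpha'd]$ with $\alpha'\le\alpha$, so that ``robustly matchable'' applies; and (iii) small enough in maximum value that $\sum_{j\in[K]}\Delta(R_j)$ never exceeds $\rho d$, so that at each stage the graph of previously used edges has maximum degree at most $\rho d$, as ``robustly matchable'' requires. Reconciling (ii) with (iii) is exactly what forces both ends of the bound $\tfrac\rho\alpha\le K\le\tfrac{\gamma\rho d}{2}$ together with the degree-gap hypothesis \ref{(A1)} (used through \ref{(A2)}); once the parameters $L=\lfloor\tfrac{\rho d}{2K}\rfloor$ and $\alpha_j'=\tfrac{L}{(1-\gamma)d}$ are in hand the verifications are short (and use $\gamma\le\tfrac12$). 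Everything else — that the family is a genuine decomposition and that the leftover graph is spanning, bipartite, and regular — is routine degree bookkeeping.
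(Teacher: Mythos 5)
Your proof takes essentially the same route as the paper: normalize the bipartitions so that $|X_j'|=|Y_j'|$, then build up the $R_j=R_j'\cup R_j''$ one $j$ at a time as $f$-factors in $G[X_j',Y_j']\setminus F_j'$ and $G[X_j'',Y_j'']\setminus F_j''$, using the $(d,\rho,\alpha,\gamma)$-good hypothesis (and hence Ore's theorem inside it) to certify existence, and tracking the maximum degree of the edges already used so that the robustly-matchable property can still be invoked at step $j$. The verifications of $f_j(X_j')=f_j(Y_j')$, of the degree window for $f_j$, of $\delta(G[X_j',Y_j'])\geq \frac d5$ from \ref{(A3)}, and of the degree of the leftover graph $G[X,Y]\setminus\bigcup_j R_j$ all check out.

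One point to flag: your parameter choices $L=\lfloor\rho d/(2K)\rfloor$, $\alpha_j'=L/((1-\gamma)d)$ force you to invoke $\gamma\leq\frac12$ (both for $\alpha_j'\leq\alpha$ via $L\leq\frac{\alpha d}{2}\leq(1-\gamma)\alpha d$, and for $1\leq(1-\gamma)\rho d/(2K)$), and you note this parenthetically as something you ``use.'' But the lemma only assumes $\gamma>0$; it does not grant $\gamma\leq\frac12$. The paper's parameterization avoids the extra factor of two: it sets $C:=\lceil(1-\gamma)\rho d/K\rceil$ and works with $\alpha':=\rho/K$, so $\alpha'\leq\alpha$ follows directly from $K\geq\rho/\alpha$, and the containment of the $f$-values in $[(1-\gamma)\rho d/K,\,\rho d/K]$ is checked via $\frac{\gamma\rho d}{2K}\leq\frac{\rho d}{K}-C$, which uses only $K\leq\frac{\gamma\rho d}{2}$. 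Since the lemma is always applied with $\gamma=\frac{1}{30}$, your extra assumption is harmless downstream, but as a proof of the lemma as stated it is a small gap, and it is worth recognizing that it disappears simply by choosing the constants as the paper does rather than halving them.
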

\begin{proof}
    For each $j = 1, \ldots, K$, in turn, we obtain a pair of spanning subgraphs~${R'_j \subseteq G[X'_j,Y'_j]}$ and~${R''_j \subseteq G[X''_j, Y''_j]}$ such that $H_{X,j} \cup H_{Y,j} \cup R'_j \cup R''_j$ is a regular bipartite spanning subgraph of $G$, as follows. All the while, we maintain that the graphs $R'_j \cup R''_j$ are edge-disjoint, and that~$\Delta(R'_j \cup R''_j) \leq \frac{\rho d}{K}$.

    By relabeling if necessary, we may assume by \ref{(G1)} that $|X'_j| = |Y'_j| = \left \lceil \frac{n}{4} \right \rceil$ and $|X''_j| = |Y''_j| = \left \lfloor \frac{n}{4} \right \rfloor$. Set $C := \left \lceil (1-\gamma)\frac{\rho d}{K} \right \rceil$. Let $f_j' : X'_j \cup Y'_j \to \mathbb N$ be given by
    \[f'_j(v) = C + \Delta(H_{X,j} \cup H_{Y,j}) - \deg_{H_{X,j} \cup H_{Y,j}}(v)\]
    for every $v \in X'_j \cup Y'_j$, and let $f_j'' : X''_j \cup Y''_j \to \mathbb N$ be given by
    \[f''_j(v) = C + \Delta(H_{X,j} \cup H_{Y,j}) - \deg_{H_{X,j} \cup H_{Y,j}}(v)\]
    for every $v \in X''_j \cup Y''_j$. We wish to find an $f'_j$-factor $R'_j$ of~$(G \setminus F_j)[X'_j, Y'_j]$ and an $f''_j$-factor $R''_j$ of~$(G \setminus F_j)[X''_j, Y''_j]$, where $F_j := \bigcup_{i < j} (R'_i \cup R''_i)$.

    It follows from \ref{(A2)} and the fact that 
    \[V(H_{X,j}) = |X| = |Y| = V(H_{Y,j})\]
    that $H_{X,j}$ and $H_{Y,j}$ have the same average degree, so we have
    \begin{align*}
        \Delta(H_{X,j} \cup H_{Y,j}) - \delta(H_{X,j} \cup H_{Y,j}) &\leq \left(\Delta(H_{X,j}) - \delta(H_{X,j})\right) + \left(\Delta(H_{Y,j}) - \delta(H_{Y,j})\right) \\
        &\leq \frac{\gamma\rho d}{2K} \\
        &\leq \frac{\rho d}{K} - C
    \end{align*}
    by \ref{(A1)} and the assumption that $K \leq \frac{\gamma \rho d}{2}$. This means that $f_j'$ and $f''_j$ both take values in~$\left[(1-\gamma)\frac{\rho d}{K}, \frac{\rho d}{K}\right]$. Furthermore, we have by \ref{(A2)} that
    \begin{align*}
        f'_j(X'_j) &= \left(C + \Delta(H_{X,j} \cup H_{Y,j})\right)|X'_j| - e(H_{X,j}) \\
        &= \left(C + \Delta(H_{X,j} \cup H_{Y,j})\right)|Y'_j| - e(H_{Y,j}) \\
        &= f'_j(Y'_j),
    \end{align*}
    and a similar calculation shows $f''_j(X''_j) = f''_j(Y''_j)$. Also recall that $\frac{\rho}{K} \leq \alpha$ by assumption, and that $\Delta\left(F_j\right) \leq \rho d$ by the inductive hypothesis. Therefore, by \ref{(A3)} and the fact that $G[X,Y]$ is~$(d,\rho,\alpha,\gamma)$-good, we see that $\left(G\setminus F_j\right)[X'_j, Y'_j]$ has an~$f'_j$-factor $R'_j$, and $\left(G\setminus F_j\right)[X''_j, Y''_j]$ has an~$f''_j$-factor $R''_j$. Now by design, we have that~$R'_j \cup R''_j$ is edge-disjoint from ${F_j = \bigcup_{i < j} (R'_i \cup R''_i)}$, and that~$\Delta(R'_j \cup R''_j) \leq \frac{\rho d}{K}$. We also have that $H_{X,j} \cup H_{Y,j} \cup R'_j \cup R''_j$ is a bipartite $\left(C + \Delta(H_{X,j} \cup H_{Y,j})\right)$-regular spanning subgraph of $G$, with bipartition $\{X'_j \cup Y''_j, X''_j \cup Y'_j\}$. This completes the inductive step.

    At last, we define $H := G \setminus \bigcup_{j=1}^K (H_{X,j} \cup H_{Y,j} \cup R'_j \cup R''_j)$. Since $G$ is regular, and the edge-disjoint regular graphs $H_{X,j} \cup H_{Y,j} \cup R'_j \cup R''_j$ cover all of the edges in $G[X] \cup G[Y]$, we see that $H$ is a regular spanning subgraph of $G[X,Y]$. Thus $\{H_{X,j} \cup H_{Y,j} \cup R'_j \cup R''_j\}_{j=1}^K \cup \{H\}$ is a decomposition of $G$ into $K+1$ regular bipartite spanning subgraphs, as desired.
\end{proof}

\section{Proof of our main result}\label{section:main proof}
\indent

We are now ready to prove our main theorem.
\begin{proof}[Proof of \Cref{main-thm}]
Let $d$ be a sufficiently large positive integer, and let $n \geq d+1$ be an even integer. Let $G$ be an $(n,d,\lambda)$-graph on $[n]$ with $\lambda \leq \frac{d}{12}$. First, we take a balanced bipartition $\{X,Y\}$ of $[n]$ as in \Cref{lem:almost regular bipartition}. Note that $e(G[X]) = e(G[Y])$. Indeed, since $G$ is regular and $|X| = |Y| = \frac{n}{2}$, we have
\begin{align*}
    2e(G[X]) + e(G[X,Y]) &= d|X| = d|Y| = 2e(G[Y]) + e(G[X,Y]).
\end{align*}

Let $M := \lfloor \log_2 d \rfloor - 18$. For $1 \leq j \leq M$, we iteratively obtain a sequence of pairs $(H_{X,j}, H_{Y,j})$ of edge-disjoint bipartite spanning subgraphs $H_{X,j} \subseteq G[X]$ and $H_{Y,j} \subseteq G[Y]$. For each $U \in \{X,Y\}$, we set $G_{U,0} := G[U]$, and for $1 \leq j \leq M$, we set $G_{U,j} := G_{U,j-1} \setminus H_{U,j}$. For $0 \leq j \leq M$, we also define $d_j := \frac{d}{2^{j+1}}$ and $\epsilon_j := 40d_j^{-1/3}$.
\begin{claim}\label{claim:iteration works}
    We can ensure the following properties for $0 \leq j \leq M$:
    \begin{enumerate}[label=(I\arabic*)]
    \item\label{(I1)} if $j \geq 1$, then for each $U \in \{X,Y\}$, we have $\Delta(H_{U,j}) - \delta(H_{U,j}) \leq 70d_j^{2/3}$;
    \item\label{(I2)} if $j \geq 1$, then $e(H_{X,j}) = e(H_{Y,j})$;
    \item\label{(I3)} if $j \geq 1$, then for each $U \in \{X,Y\}$, the bipartite graph $H_{U,j}$ has a bipartition which is $\frac{d}{5}$-good with respect to $G[X,Y]$;
	\item\label{(I4)} for each $U \in \{X,Y\}$, we have $\deg_{G_{U,j}}(v) = (1 \pm \epsilon_{j})d_j$ for every $v \in U$.
    \end{enumerate}
\end{claim}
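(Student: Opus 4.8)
The plan is to establish \Cref{claim:iteration works} by induction on $j$, while simultaneously maintaining the auxiliary invariant $e(G_{X,j}) = e(G_{Y,j})$; write $E_j$ for this common value (so $E_0 = e(G[X]) = e(G[Y])$, as already noted). The base case $j = 0$ is free: \ref{(I1)}, \ref{(I2)}, \ref{(I3)} are vacuous, and since $G_{U,0} = G[U]$ has every degree in $\frac{d}{2} \pm d^{2/3} = (1 \pm 2d^{-1/3})d_0$ by our choice of $\{X,Y\}$ (\Cref{lem:almost regular bipartition}), and $2d^{-1/3} \le \epsilon_0 = 40(d/2)^{-1/3}$, we get \ref{(I4)}.

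For the inductive step, assume $1 \le j \le M$, that \ref{(I4)} holds at level $j-1$, and that $e(G_{X,j-1}) = e(G_{Y,j-1}) = E_{j-1}$. The first step is to apply \Cref{lem:good bisection of one half} with $H := G[X,Y]$, $G_X := G_{X,j-1}$, $d' := d_{j-1}$, $\epsilon := \epsilon_{j-1}$ --- the hypotheses are met because $j \le M = \lfloor \log_2 d \rfloor - 18$ forces $2^{18} \le d_{j-1} \le d$, and hence $\epsilon_{j-1} = 40d_{j-1}^{-1/3} \le \frac{40}{64} = \frac{5}{8}$ --- and likewise with $G_Y := G_{Y,j-1}$. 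This produces bipartitions $\{X'_j, X''_j\}$ of $X$ and $\{Y'_j, Y''_j\}$ of $Y$, each $\frac{d}{5}$-good with respect to $G[X,Y]$, together with the bipartite graphs $\widetilde H_{X,j} := G_{X,j-1}[X'_j, X''_j]$ and $\widetilde H_{Y,j} := G_{Y,j-1}[Y'_j, Y''_j]$, for which $\deg_{\widetilde H_{U,j}}(v) = (1 \pm 2d_{j-1}^{-1/3})\frac{\deg_{G_{U,j-1}}(v)}{2}$ for every $v \in U$. Feeding in \ref{(I4)} at level $j-1$ and using $d_{j-1} = 2d_j$, I would record that (for $d$ large) every degree of $\widetilde H_{U,j}$ and of $G_{U,j-1} \setminus \widetilde H_{U,j}$ lies in $(1 \pm \delta_j)d_j$, where $\delta_j = 42d_{j-1}^{-1/3} + 80d_{j-1}^{-2/3} \le 34d_j^{-1/3}$ (the genuine constant being $42/2^{1/3} \approx 33.3$, with $80d_{j-1}^{-2/3}$ negligible once $d_0$ is large); in particular $\Delta(\widetilde H_{U,j}) - \delta(\widetilde H_{U,j}) \le 2\delta_j d_j \le 68d_j^{2/3}$. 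Summing the degree identity over $U$ also yields $e(\widetilde H_{X,j}), e(\widetilde H_{Y,j}) = (1 \pm 2d_{j-1}^{-1/3})\frac{E_{j-1}}{2}$, so $|e(\widetilde H_{X,j}) - e(\widetilde H_{Y,j})| \le 2E_{j-1}d_{j-1}^{-1/3}$ while each of them is at least $\frac{E_{j-1}}{4}$.

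Next I would reconcile the edge counts and close out. Assume without loss of generality that $e(\widetilde H_{X,j}) \ge e(\widetilde H_{Y,j})$; put $m := e(\widetilde H_{Y,j})$, use \Cref{lem:subgraph with m edges} to pull out an $m$-edge spanning subgraph $H_{X,j} \subseteq \widetilde H_{X,j}$ obeying \ref{(S1)} and \ref{(S2)}, and set $H_{Y,j} := \widetilde H_{Y,j}$ and $G_{U,j} := G_{U,j-1} \setminus H_{U,j}$. Then \ref{(I2)} holds by construction and forces $e(G_{X,j}) = E_{j-1} - m = e(G_{Y,j})$, keeping the invariant alive; \ref{(I3)} is inherited since $H_{U,j}$ keeps the bipartition of $\widetilde H_{U,j}$, which is $\frac{d}{5}$-good with respect to $G[X,Y]$; and \ref{(I1)} follows from \ref{(S1)}: $\Delta(H_{X,j}) - \delta(H_{X,j}) \le \Delta(\widetilde H_{X,j}) - \delta(\widetilde H_{X,j}) + 2 \le 68d_j^{2/3} + 2 \le 70d_j^{2/3}$, while $H_{Y,j} = \widetilde H_{Y,j}$ already meets the bound. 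For \ref{(I4)} at level $j$: the $Y$-side is unchanged below the cut, $G_{Y,j} = G_{Y,j-1} \setminus \widetilde H_{Y,j}$ having all degrees in $(1 \pm \delta_j)d_j \subseteq (1 \pm \epsilon_j)d_j$; on the $X$-side, $G_{X,j} = (G_{X,j-1} \setminus \widetilde H_{X,j}) \cup (\widetilde H_{X,j} \setminus H_{X,j})$, so its degrees are at least $(1 - \delta_j)d_j \ge (1 - \epsilon_j)d_j$ and at most $(1 + \delta_j)d_j + \Delta(\widetilde H_{X,j} \setminus H_{X,j})$, where \ref{(S2)} together with the bounds above gives $\Delta(\widetilde H_{X,j} \setminus H_{X,j}) \le (\Delta(\widetilde H_{X,j}) + 1)\frac{e(\widetilde H_{X,j}) - m}{e(\widetilde H_{X,j})} + 1 \le (1 + o(1))\,d_j \cdot 4d_{j-1}^{-1/3} + 1 \le 4d_j^{2/3}$ for $d$ large; hence $\deg_{G_{X,j}}(x) \le (1 + 34d_j^{-1/3} + 4d_j^{-1/3})d_j < (1 + \epsilon_j)d_j$.

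The step I expect to be the real obstacle is the constant accounting in the last paragraph: one must verify that the relative degree error provably stays below $\epsilon_j = 40d_j^{-1/3}$ after both the halving (which scales the old relative error by roughly $2^{-1/3}$ and tacks on the $2d_{j-1}^{-1/3}$ term from \ref{(L1)}, for a leading contribution of about $42/2^{1/3} \approx 33.3$ times $d_j^{-1/3}$) and the edge-balancing (whose leading cost on the heavier side is about $4/2^{1/3} \approx 3.2$ times $d_j^{-1/3}$). All the lower-order pieces --- the $80d_{j-1}^{-2/3}$ products, the $+1$ and $+2$ additive slack from \Cref{lem:subgraph with m edges}, and the discrepancy between $e(\widetilde H_{X,j})$ and $\frac{E_{j-1}}{2}$ --- must be absorbed into the choice of a sufficiently large $d_0$, so that the genuine inequalities $33.3 + 3.2 < 40$ (for \ref{(I4)}) and $66.7 + o(1) < 70$ (for \ref{(I1)}) carry the induction. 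A secondary subtlety is that \Cref{lem:subgraph with m edges} is invoked on only one of $\widetilde H_{X,j}, \widetilde H_{Y,j}$, so the extra degree shows up on a single side; the ``without loss of generality'' reduction is exactly what lets both cases be treated at once.
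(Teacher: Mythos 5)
Your overall strategy is the same as the paper's: apply \Cref{lem:good bisection of one half} to each of $G_{X,j-1}$ and $G_{Y,j-1}$, then use \Cref{lem:subgraph with m edges} to match the edge counts, and check that the degree error stays below $\epsilon_j d_j$ after halving and trimming. The one cosmetic difference is that you trim only the heavier of $\widetilde H_{X,j}, \widetilde H_{Y,j}$, while the paper applies \Cref{lem:subgraph with m edges} to both (with $m = \min$, so one application is vacuous); either works. The induction is sound, so this is not a gap, but two points in your constant accounting deserve correction.

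First, the resolution of the ``obstacle'' you flag is not a larger $d_0$. The worst case in the induction occurs near $j = M$, where $d_{j-1}$ can be exactly $2^{18}$ (and $d_j$ exactly $2^{17}$) no matter how large $d$ is, because $M = \lfloor \log_2 d\rfloor - 18$ scales with $d$. None of the lower-order terms --- the $80d_{j-1}^{-2/3}$, the $+1$ and $+2$ from \Cref{lem:subgraph with m edges} --- shrink as $d_0 \to \infty$; they are controlled only because $2^{17}$ was built into the definition of $M$. At $d_{j-1} = 2^{18}$, $\delta_j/d_j^{-1/3} \approx 34.3$, not $\leq 34$, and the paper's final check $\frac{d_j}{2} + 23d_j^{2/3} + 120d_j^{1/3} + 3 < d_{j+1} + 40 d_{j+1}^{2/3}$ is verified at $d_j = 2^{18}$, not ``for $d_0$ large.''

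Second, your trimming coefficient $\approx 3.2$ (from $4/2^{1/3}$) understates the cost: \ref{(S2)} multiplies by $\Delta(\widetilde H_{X,j}) + 1 \approx (1+\delta_j)d_j$, and $\delta_j$ is not negligible at the worst level ($\approx 0.53$ at $d_{j-1} = 2^{18}$), so the actual coefficient is closer to $(1+\delta_j)\cdot 4/2^{1/3} \approx 5$. Consequently $\Delta(\widetilde H_{X,j} \setminus H_{X,j}) \approx 5d_j^{2/3}$, not $\leq 4d_j^{2/3}$. The ledger for \ref{(I4)} is then $\approx 34.3 + 5 \approx 39.3 < 40$, which still closes but with far less slack than your ``$33.3 + 3.2 < 40$.'' Relatedly, collapsing $(1 \pm 2d_{j-1}^{-1/3})(1 \pm \epsilon_{j-1})$ into a single symmetric interval $(1 \pm \delta_j)$ and then writing $\Delta - \delta \leq 2\delta_j d_j$ loses the cancellation of the $2d_{j-1}^{-1/3}\epsilon_{j-1}$ cross-term; the paper keeps the product form, which gives $\Delta(\widetilde H_{U,j}) - \delta(\widetilde H_{U,j}) \leq 42 d_{j-1}^{2/3}$ exactly, rather than your looser $42 d_{j-1}^{2/3} + 80 d_{j-1}^{1/3}$, so that $42d_{j-1}^{2/3} + 2 < 70 d_j^{2/3}$ follows with a more visible margin.
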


\begin{proof}[Proof of \Cref{claim:iteration works}]
    We observe that \ref{(I1)}--\ref{(I4)} are satisfied for $j=0$ already. Indeed, \ref{(I1)}--\ref{(I3)} hold trivially, and by \Cref{lem:almost regular bipartition}, for each $U \in \{X,Y\}$, we have
    \[\deg_{G_{U,0}}(v) = \frac{d}{2} \pm d^{2/3} = (1 \pm \epsilon_0)d_0\]
    for every $v \in U$. 
    Now suppose that, for some $j \geq 0$, we have the graphs~$H_{X,1}, \ldots, H_{X,j}$ and~$H_{Y,1}, \ldots, H_{Y,j}$ as claimed. If $j < M$, then we now wish to obtain $H_{X,j+1}$ and $H_{Y,j+1}$.

    We first obtain, for each $U \in \{X,Y\}$, an induced bipartite spanning subgraph $H_{U,j+1}'$ of $G_{U,j}$ as in \Cref{lem:good bisection of one half}, with $\epsilon_j$, $d_j$, $G[X,Y]$, $U$, $G_{U,j}$, $H'_{U,j+1}$ in place of $\epsilon$, $d'$, $H$, $X$, $G_X$, $H_X$, respectively.
    We note that $d_j \geq \frac{d}{2^M} \geq 2^{18}$, so then $\epsilon_j \leq 40 \cdot 2^{-18/3} = \frac{5}{8}$, and our application of \Cref{lem:good bisection of one half} is justified. Specifically, by \ref{(L1)} for $H'_{U,j+1}$ and \ref{(I4)} for $G_{U,j}$, we have that 
    \begin{equation}\label{eq:L1 explicit}
        \deg_{H_{U,j+1}'}(v) = \left(1 \pm 2d_j^{-1/3}\right)\frac{\deg_{G_{U,j}}(v)}{2} = \left(1 \pm 2d_j^{-{1/3}}\right)\frac{(1 \pm \epsilon_j)d_j}{2}
    \end{equation}
    for every $v \in U$.

    Next, let $m := \min\{e(H'_{X,j+1}), e(H'_{Y,j+1})\}$. For each $U \in \{X,Y\}$, let $H_{U,j+1}$ be an $m$-edge spanning subgraph of $H'_{U,j+1}$ as in \Cref{lem:subgraph with m edges}, with $H'_{U,j+1}$, $H_{U,j+1}$ in place of $H$, $H'$, respectively. By \ref{(S1)} for $H_{U,j+1}$ and \eqref{eq:L1 explicit}, we have
    \begin{align*}
        \Delta(H_{U,j+1}) -\delta(H_{U,j+1}) &\leq \left(1 + 2d_j^{-1/3}\right)\frac{(1 + \epsilon_j)d_j}{2} - \left(1 - 2d_j^{-1/3}\right)\frac{(1 - \epsilon_j)d_j}{2} + 2 \\
        &= 42d_j^{2/3} + 2 \\
        &< 70d_{j+1}^{2/3}.
    \end{align*}
    Thus each $H_{U,j+1}$ satisfies \ref{(I1)}. Now $e(H_{X,j+1}) = m= e(H_{Y,j+1})$, so \ref{(I2)} is satisfied as well. Also, since each $H_{U,j+1}$ is a spanning subgraph
    of $H'_{U,j+1}$, we see that \ref{(I3)} follows from \ref{(L2)} for $H'_{U,j+1}$.
    
    It remains to verify that each $G_{U,j+1}= G_{U,j} \setminus H_{U,j+1}$ satisfies \ref{(I4)}. We first observe that
    \begin{equation}\label{eq:I4 verification}
        \deg_{G_{U,j} \setminus H'_{U,j+1}}(v) \leq \deg_{G_{U,j+1}}(v) \leq \deg_{G_{U,j} \setminus H'_{U,j+1}}(v) + \Delta(H'_{U,j+1} \setminus H_{U,j+1})
    \end{equation}
    for each $U \in \{X,Y\}$ and every $v \in U$. Next, note that that
    \[e(H'_{U,j+1}) = \frac{1}{2}\sum_{v \in U} \deg_{H'_{U,j+1}}(v) = \frac{1}{2}\sum_{v \in U}\left(1 \pm 2d_j^{-1/3}\right)\frac{\deg_{G_{U,j}}(v)}{2} = \left(1 \pm 2d_j^{-1/3}\right)\frac{e(G_{U,j})}{2}\]
    for each $U \in \{X,Y\}$ by \eqref{eq:L1 explicit}. Also, recall that $e(G[X]) = e(G[Y])$, so by~\ref{(I2)} and induction, we have that $e(G_{X,j}) = e(G_{Y,j})$. It follows that for each $U \in \{X,Y\}$, we have by our choice of $m$ that
    \[\left(1 - 2d_j^{-1/3}\right)\frac{e(G_{U,j})}{2} \leq m \leq e(H'_{U,j+1}) \leq \left(1 + 2d_j^{-1/3}\right)\frac{e(G_{U,j})}{2}.\]
    Now by~\ref{(S2)} and \eqref{eq:L1 explicit}, we have
    \begin{align*}
        \Delta(H'_{U,j+1} \setminus H_{U,j+1}) &\leq \left(\Delta(H'_{U,j+1})+1\right)\frac{e(H'_{U,j+1})-m}{e(H'_{U,j+1})} + 1 \\
        &\leq \left(\Delta(H'_{U,j+1})+1\right)\frac{4d_j^{-1/3}}{1+2d_j^{-1/3}} + 1 \\
        &\leq 2(1 + \epsilon_j)d_j^{2/3} + 3.
    \end{align*}
    Now using the upper and lower bounds \eqref{eq:I4 verification} on $\deg_{G_{U,j+1}}(v)$ observed above, we have by \eqref{eq:L1 explicit} and \ref{(I4)} for $G_{U,j}$ that
        \begin{align*}
            \deg_{G_{U,j+1}}(v) &\leq (1 + 2d_j^{-1/3})\frac{(1+\epsilon_j)d_j}{2} + 2(1+\epsilon_j)d_j^{2/3} + 3 \\
            &= \frac{d_j}{2} + 23d_j^{2/3} + 120d_j^{1/3} + 3 \\
            &< \frac{d_j}{2} + 25d_j^{2/3} \\
            &< d_{j+1} + 40d_{j+1}^{2/3} \\
            &= (1 + \epsilon_{j+1})d_{j+1}
        \end{align*}
        since $d_j \geq 2^{18}$, and also
        \[\deg_{G_{U,j+1}}(v) \geq \left(1-2d_j^{-1/3}\right)\frac{(1-\epsilon_j)d_j}{2} \geq \frac{d_j}{2} - 21d_j^{2/3} \geq (1-\epsilon_{j+1})d_{j+1}.\]
        Thus $G_{U,j+1}$ satisfies \ref{(I4)}, and the proof of \Cref{claim:iteration works} is complete.
    \end{proof}

We are now left with graphs $G_{X,M}$ and $G_{Y,M}$ of maximum degree at most $(1+\epsilon_M)\frac{d}{2^{M+1}} \leq 2^{19}$. By \Cref{lem:cleanup}, there exist spanning bipartite decompositions $\{H'_{X,M+j}\}_{j=1}^{27}$ and $\{H'_{Y,M+j}\}_{j=1}^{27}$ of~$G_{X,M}$ and~$G_{Y,M}$, respectively, such that each $H'_{X,j}$ and each $H'_{Y,j}$ has a bipartition which is $\frac{d}{5}$-good with respect to $G[X,Y]$. By \Cref{lem:same size refinements} applied to $\{E(H_{X,M+j})\}_{j=1}^{27}$ and $\{E(H_{Y,M+j})\}_{j=1}^{27}$, there exist $t \leq 53$ and respective spanning bipartite decompositions $\{H_{X,M+j}\}_{j=1}^t$ and $\{H_{X,M+j}\}_{j=1}^t$ of $G_{X,M}$ and $G_{Y,M}$ such that for each $1 \leq j \leq t$, $H_{U,M+j}$ is a spanning subgraph of $H'_{U,j'}$ for some $j'$, and $e(H_{X,M+j}) = e(H_{Y,M+j})$.

Finally, we claim that $\{H_{X,j}\}_{j=1}^{M+t}$ and $\{H_{Y,j}\}_{j=1}^{M+t}$ satisfy the hypotheses of \Cref{lem:regularization} with
\[{\rho := \frac{1}{200}}, \quad \alpha := \frac{1}{200}, \quad \gamma := \frac{1}{30}, \quad \text{and} \quad K := M + t\]
if $d$ is sufficiently large, so $G$ admits a decomposition into at most $M+t+1 \leq \log_2 d + 36$ regular bipartite spanning subgraphs. Indeed, $G$ is~$(d,\rho, \alpha,\gamma)$-good by \Cref{lem:good induced bipartite subgraphs of expanders}. We also have~$\frac{\rho}{\alpha} \leq K \leq \frac{\gamma \rho d}{2}$. For~$1 \leq j \leq M$ and~$U \in \{X,Y\}$, we have~$\Delta(H_{U,j}) - \delta(H_{U,j}) \leq 70d^{2/3} \leq \frac{\gamma \rho d}{2K}$ by~\ref{(I1)}. This, together with~\ref{(I2)}--\ref{(I3)}, verifies~\ref{(A1)}--\ref{(A3)} for~$1 \leq j \leq M$. For~$M + 1 \leq j \leq M+t$ and~$U \in \{X,Y\}$, we have~$\Delta(H_{U,j}) - \delta(H_{U,j}) \leq \Delta(G_{U,M}) \leq 2^{19} \leq \frac{\gamma \rho d}{4K}$, so \ref{(A1)} is satisfied. We have already observed that \ref{(A2)} is satisfied, and \ref{(A3)} follows from the fact that each $H_{U,j}$ with $M+1 \leq j \leq M+t$ is a spanning subgraph of some $H'_{U,j'}$. This concludes the proof of \Cref{main-thm}.
\end{proof}

\section*{Acknowledgements}
We would like to thank the organizers and the participants of the Desert Discrete Math Workshop 2024, where this research project was initiated and a part of it was completed.

\bibliographystyle{abbrv}
	\bibliography{literature}
\end{document}